\newcommand{\fieldsize}{14} 
\newcommand{\fieldsizebp}{\fieldsize bp}
\newcommand{\stonesmaller}{2bp}
\newcommand{\textstoneraise}{-4bp}
\newcommand{\polycellsize}{6} 
\newcommand{\fieldend}{%
\vrule width\fieldsizebp height0mm depth0mm\vrule width0mm height\fieldsizebp\relax%
}
\newcommand\specialps{} 
\gdef\specialps#1{\special{" field-setup #1}}
\newcommand{\fieldps}[1]{%
  \vrule width0mm height\fieldsizebp\relax
  \specialps{#1}%
}
\newcommand{\fieldcenter}[1]{\rlap{\vbox to\fieldsizebp{\vfil\hbox to\fieldsizebp{\hfil \strut #1\hfil}\vfil}}}
\newcommand{\fieldcenterwhite}[1]{

\newcommand{\textstone}[1]{\raisebox{\textstoneraise}{%
\kern-\stonesmaller#1\vrule width\fieldsizebp height0mm depth0mm\relax\kern-\stonesmaller%
}}


\newcommand{\extN}{}
\newcommand{\extS}{}
\newcommand{\extE}{}
\newcommand{\extW}{}
\newcommand{\fif}{ff \extN\extS\extE\extW}

\newcommand{\nostone}{\fieldps{\fif}\fieldend}

\newcommand{\nostoned}{\fieldps{\fif do}\fieldend}

\newcommand{\fieldstonew}[1]{\fieldps{\fif sw}\fieldcenter{\scriptsize #1}\fieldend}
\newcommand{\fieldstoneb}[1]{\fieldps{\fif sb}\fieldcenterwhite{\scriptsize #1}\fieldend}

\newcommand{\nothing}{\fieldend}

\newcommand{\letter}[1]{\fieldps{\fif}\fieldcenter{{\footnotesize #1}}\fieldend}

\newcommand{\ministonew}{\fieldps{\fif mw}\fieldend}
\newcommand{\ministoneb}{\fieldps{\fif mb}\fieldend}
\newcommand{\ministoned}{\fieldps{\fif md}\fieldend}

\newcommand{\paveup}{\fieldps{\fif pu}\fieldend}
\newcommand{\pavedown}{\fieldps{\fif pd}\fieldend}
\newcommand{\paveleft}{\fieldps{\fif pl}\fieldend}
\newcommand{\paveright}{\fieldps{\fif pr}\fieldend}
\newcommand{\paveupdown}{\fieldps{\fif pud}\fieldend}
\newcommand{\paveleftright}{\fieldps{\fif plr}\fieldend}

\newcommand{\fieldcoord}[1]{\vrule width0mm height\fieldsizebp\relax\fieldcenter{#1}\fieldend}

\makeatletter

\newcommand{\boardcatcodes}{%
\catcode`\^^M\active
\catcode`\ \active
\catcode`\.\active
\catcode`\O\active
\catcode`\X\active
\catcode`\0\active
\catcode`\1\active
\catcode`\2\active
\catcode`\3\active
\catcode`\4\active
\catcode`\5\active
\catcode`\6\active
\catcode`\7\active
\catcode`\8\active
\catcode`\9\active
\catcode`\*\active
\catcode`\+\active
\catcode`\o\active
\catcode`\x\active
\catcode`\q\active
\catcode`\a\active
\catcode`\b\active
\catcode`\c\active
\catcode`\d\active
\catcode`\e\active
\catcode`\f\active
\catcode`\g\active
\catcode`\h\active
\catcode`\>\active
\catcode`\<\active
\catcode`\^\active
\catcode`\v\active
\catcode`\|\active
\catcode`\-\active
\catcode`\\\active
\catcode`\N\active
\catcode`\S\active
\catcode`\E\active
\catcode`\W\active
}

\def\definitionstack{}

\long\def\addboarddefinition#1#2{%
\expandafter\def\expandafter\definitionstack\expandafter{\definitionstack%
\def#1{#2}%
}%
}

\def\boarddefinition{%
\begingroup%
\boardcatcodes%
\boarddefinition@%
}
\def\boarddefinition@#1{%
\endgroup%
\addboarddefinition{#1}
}

\boarddefinition{N}{\def\extN{N }}
\boarddefinition{S}{\def\extS{S }}
\boarddefinition{E}{\def\extE{E }}
\boarddefinition{W}{\def\extW{W }}
\boarddefinition{ }{\doline\doboardspace}
\boarddefinition{+}{\doline\nostone\eatspacemode}
\boarddefinition{.}{\doline\nostoned\eatspacemode}
\boarddefinition{*}{\doline\nostoned\eatspacemode}
\boarddefinition{O}{\doline\stonemodes{\fieldstonew}}
\boarddefinition{X}{\doline\stonemodes{\fieldstoneb}}
\boarddefinition{o}{\doline\ministonew\eatspacemode}
\boarddefinition{x}{\doline\ministoneb\eatspacemode}
\boarddefinition{q}{\doline\ministoned\eatspacemode}
\boarddefinition{
}{\donewline} 
\boarddefinition{0}{\doline\savenumbereven\doboardnumber{0}}
\boarddefinition{1}{\doline\savenumberodd\doboardnumber{1}}
\boarddefinition{2}{\doline\savenumberodd\doboardnumber{2}}
\boarddefinition{3}{\doline\savenumberodd\doboardnumber{3}}
\boarddefinition{4}{\doline\savenumbereven\doboardnumber{4}}
\boarddefinition{5}{\doline\savenumberodd\doboardnumber{5}}
\boarddefinition{6}{\doline\savenumbereven\doboardnumber{6}}
\boarddefinition{7}{\doline\savenumberodd\doboardnumber{7}}
\boarddefinition{8}{\doline\savenumbereven\doboardnumber{8}}
\boarddefinition{9}{\doline\savenumberodd\doboardnumber{9}}
\boarddefinition{a}{\doline\doboardletter{a}}
\boarddefinition{b}{\doline\doboardletter{b}}
\boarddefinition{c}{\doline\doboardletter{c}}
\boarddefinition{d}{\doline\doboardletter{d}}
\boarddefinition{e}{\doline\doboardletter{e}}
\boarddefinition{f}{\doline\doboardletter{f}}
\boarddefinition{g}{\doline\doboardletter{g}}
\boarddefinition{h}{\doline\doboardletter{h}}
\boarddefinition{>}{\doline\paveright\eatspacemode}
\boarddefinition{<}{\doline\paveleft\eatspacemode}
\boarddefinition{^}{\doline\paveup\eatspacemode}
\boarddefinition{v}{\doline\pavedown\eatspacemode}
\boarddefinition{|}{\doline\paveupdown\eatspacemode}
\boarddefinition{-}{\doline\paveleftright\eatspacemode}
\boarddefinition{\}{\dobackslash}
{{{{{{}}}}}}  

\newcommand{\activateboard}{%
\begingroup
\definitionstack
\boardcatcodes
}

\newcommand{\deactivateboard}{%
\coordx\endgroup%
}

\def\activateboarddefs{%
\def\normalnewlinemode{\def\donewline{\doline\doboardspace\par\normalspacemode\beginlinemode}}
\def\eatnewlinemode{\def\donewline{\normalnewlinemode}}
\eatnewlinemode
\def\normalspacemode{\def\doboardspace{\nothing\eatspacemode}}%
\def\specialspacemode##1{%
  \def\doboardspace{##1\normalmodes}%
}%
\def\eatspacemode{\specialspacemode{}}%
\def\savenumberodd{\def\reallydonumber####1{\fieldstoneb{####1}}}%
\def\savenumbereven{\def\reallydonumber####1{\fieldstonew{####1}}}%
\def\normalnumbermode{\def\doboardnumber####1{%
   \def\savenumber{####1}%
   \specialspacemode{\reallydonumber{\savenumber}}%
   \specialnumbermode{\reallydonumber{\savenumber\extranumber}}%
}}%
\def\specialnumbermode##1{%
  \def\doboardnumber####1{\def\extranumber{####1}##1\normalmodes}%
}
\def\normallettermode{\def\doboardletter####1{%
  \def\saveletter{####1}%
  \specialspacemode{\letter{\saveletter}}%
  \specialnumbermode{\letter{\saveletter$_{\extranumber}$}}%
}}%
\def\speciallettermode##1{%
  \def\doboardletter####1{\def\extraletter{####1}##1\normalmodes}%
}
\def\stonemodes##1{%
  \specialspacemode{##1{}}%
  \speciallettermode{##1{\extraletter}}%
}%
\def\normalmodes{\normalspacemode\normalnumbermode\normallettermode}
\normalmodes
\def\dobackslash{\deactivateboard\@ifnextchar e{\readend}{\readnotes}}
\def\readend end{\end}
\def\readnotes notes{%
\endgroup 
\begingroup 
\parindent0mm
\small}
\def\beginlinemode{\def\doline{\coordy\normallinemode}}%
\def\normallinemode{\def\doline{}}%
\beginlinemode%
\def\nocoordinates{%
  \def\coordx{}%
  \def\coordy{}%
  \activateboard%
}%
\def\readcoordinates(##1,##2){%
  \def\coordx{%
    \nothing%
    \setcounter{coordcnt}{1}%
    \loop\fieldcoord{\alph{coordcnt}}\ifnum\c@coordcnt<##1\addtocounter{coordcnt}{1}\repeat%
    \par
  }%
  \def\coordy{%
    \fieldcoord{$\arabic{coordcnt}$}%
    \addtocounter{coordcnt}{-1}%
  }%
  \setcounter{coordcnt}{##2}%
  \activateboard%
}%
\def\readmorecoordinates(##1,##2)(##3,##4){%
  \def\coordx{%
    \nothing%
    \setcounter{coordcnt}{##1}%
    \loop\fieldcoord{{\small$\arabic{coordcnt}$}}\ifnum\c@coordcnt<##3\addtocounter{coordcnt}{1}\repeat%
    \par
  }%
  \def\coordy{%
    \fieldcoord{{\small$\arabic{coordcnt}$}}%
    \addtocounter{coordcnt}{-1}%
  }%
  \setcounter{coordcnt}{##4}%
  \activateboard%
}%
}

\makeatother

\newcommand{\boardwidth}{0.43\textwidth}

\newenvironment{bboard}{
\begin{minipage}{\boardwidth}
\vskip1ex%
\begingroup%
\parindent0mm%
\baselineskip0mm%
\lineskiplimit0mm%
\lineskip0mm%
\activateboarddefs%
}{%
\endgroup%
\vspace*{1ex}%
\end{minipage}\hspace{\fill}%
}

\newcounter{coordcnt}


\newlength{\polycelllength}
\setlength{\polycelllength}{\polycellsize bp}

\newcommand{\polyo}[2]{\strut\specialps{#1}\kern#2\polycelllength}




%
%
%
%
%
%
%
%

\newtheorem{Theo}{Theorem}
\newtheorem{Lem}[Theo]{Lemma}
\newtheorem{Cor}[Theo]{Corollary}
\newtheorem{Con}[Theo]{Conjecture}
\newtheorem{prop}[Theo]{Proposition}

\newcommand{\co}[1]{c_{\mathrm{#1}}}

\newcommand{\bobox}[1]{\vrule\hbox to 1.7ex{\hfil\ensuremath{\scriptstyle{#1}}\hfil}}
\newcommand{\boline}[3]{\hbox{\bobox{#1}\bobox{#2}\bobox{#3}\vrule height 1.4ex depth .3ex}}
\newcommand{\boix}[9]{%
\raise-2.5ex\vbox{%
\vskip0.3ex
\hrule
\boline{#7}{#8}{#9}
\hrule
\boline{#4}{#5}{#6}
\hrule
\boline{#1}{#2}{#3}
\hrule
\vskip0.3ex
}}
\newcommand{\bovi}[6]{%
\raise-1.5ex\vbox{%
\vskip0.3ex
\hbox{\vrule\hskip 1.7ex\vrule\hskip 1.7ex\vrule\hskip 1.7ex\vrule height .1ex depth .3ex}
\hrule
\boline{#4}{#5}{#6}
\hrule
\boline{#1}{#2}{#3}
\hrule
\vskip0.3ex
}}

\newcommand{\bnode}[1]{\rnode{#1}{\bullet}}

\newcommand{\Abar}{\overline{A}}
\newcommand{\calN}{\mathcal{N}}
\newcommand{\calT}{\mathcal{T}}

\newcommand{\N}{\mathbb{N}}
\newcommand{\Z}{\mathbb{Z}}
\newcommand{\Q}{\mathbb{Q}}

\newcommand{\surject}{\twoheadrightarrow}

\begin{document}

\author[G. Bhowmik]{Gautami Bhowmik}
\author[I. Halupczok]{Immanuel Halupczok}
\thanks{The second author was supported by the Agence National de la Recherche
(contract ANR-06-BLAN-0183-01).}
\author[J.-C. Schlage-Puchta]{Jan-Christoph Schlage-Puchta}
\title[Zero-sum free sequences]{Inductive Methods and Zero-sum free sequences}
\begin{abstract} 
A fairly long standing conjecture was that the Davenport constant of a group
 $G=\Z_{n_1}\oplus\dots\oplus\Z_{n_k}$ with
$n_1|\dots|n_k$ is $1+\sum_{i=1}^k (n_i-1)$. This conjecture is false
in general, but the question remains for which groups it is true.
By using inductive methods we prove that for two fixed integers $k$
and $\ell$ it is possible to decide whether the conjecture is
satisfied for all groups of the form $\Z_k^{\ell}\oplus \Z_{n}$ with $n$ co-prime
to $k$.

We also prove the conjecture
for groups of the form $\Z_3\oplus\Z_{3n}\oplus\Z_{3n},$ where $n$ is co-prime
to $6$, assuming a conjecture about the maximal zero-sum free sets in $\Z_n^2$.
\end{abstract}

\maketitle

MSC-Index 11B50, 20K01, 20F10, 20D60

\section{Introduction and Results}

Let $G$ be a finite abelian group written additively, $a_1, \ldots,
a_k$ a sequence of elements 
in $G$. This sequence contains a zero-sum if there is some
non-empty subsequence $1\leq i_1<i_2<\dots<i_\ell\leq k$ satisfying
$a_{i_1}+\dots+a_{i_\ell}=0$, otherwise it is called zero-sum
free. Denote by $D(G)$ the least integer $k$ such that every sequence
of length $k$ contains a zero-sum, this number is usually called
Davenport's constant, since the question of whether zero-sums exist
was  studied by Davenport in the context of algebraic number
theory (where $G$ is the class group of some number field, the
elements $a_i$ are given ideal classes from which one wants to
construct a principal ideal). This line of research was continued in
the study of domains with non-unique factorisation, for an overview
see \cite{GHK}. Among applications, Br\"udern and Godinho \cite{Br} discovered that
the existence of zero-sums can be used to simplify $p$-adic forms,
which led to considerable progress towards Artin's conjecture on
$p$-adic forms. 

To avoid cumbersome notation we shall from now on always talk about
multi-sets instead of sequences; in the sequel all sets are multi-sets
unless explicitly stated otherwise. We shall write the multiplicity of
an element as its exponent, e.g. $\{a^n, b^m\}$ is a multi-set containing
$n+m$ elements, $n$ of which are equal to $a$, and $m$ are equal to
$b$. We believe that the imprecision implied by the non-standard use
of equality is more than outweighed by easier readability.

One approach to bound $D(G)$ is the so called inductive method, which
runs as follows: If $N<G$ is a subgroup and
$n$ an integer such that every sequence of length $n$ in $G/N$
contains a system of $D(N)$ disjoint zero-sums, then $D(G)\leq n$. In
fact, each zero-sum in $G/N$ defines an element in $N$, and choosing a
zero-sum among these elements defines a zero-sum in
$G$. Unfortunately, in general this method does not give the exact
value for $D(G)$. For example, for $G=\Z_3^2\oplus\Z_{3n}$, Delorme,
Ordaz and Quiroz showed that $D(G)\leq 3n+5$, which is 1 more than the
exact value.
The sub-optimality of this method stems from the fact that in general
we have many ways to choose a system of disjoint zero-sums, and it
suffices to show that one of these systems yields a zero-sum in
$N$. If the structure of all zero-sum free subsets in $N$ of size close to
$D(N)$ is sufficiently well understood one can use this information
to choose an appropriate system of subsets in $G/N$. In this way one
can show that for groups of the form $G=\Z_3^2\oplus\Z_{3n}$ we always
have $D(G)=3n+4$ (confer \cite{Montreal}); the corresponding lower
bound being given by the mulitset $\{(1, 0, 0)^2, (0, 1, 0)^2, (0, 0,
1)^{3n-1}\}$. In fact, this example immediately generalises to
arbitrary finite groups: If $G=\Z_{n_1}\oplus\dots\oplus\Z_{n_k}$ with
$n_1|\dots|n_k$, then $D(G)\geq M(G) := 1+\sum_{i=1}^k (n_i-1)$. The
conjecture that $D(G)=M(G)$, which we shall refer to as the main
conjecture, is proven for groups of rank 2, and fails for infinitely 
many groups of rank $\geq 4$. It is not yet known whether it holds
true for all groups of rank 3. 

In this article we generalise the improved inductive method to other
sequences of groups. We first give a decidability result.
Suppose $k, \ell \in \N$ are fixed. Then one can check the
main conjecture for all groups of the form $G := \Z_k^{\ell}\oplus\Z_{n}$ at
once (in a finite amount of time), where $n$ runs through all numbers
co-prime to $k$. Note that
$G\cong\Z_k^{\ell-1}\oplus\Z_{kn}$, so $M(G) = (\ell-1)\cdot(k-1) + kn$.
Moreover, we give a description of the set of numbers $n$
such that the main conjecture fails for $\Z_k^{\ell}\oplus\Z_{n}$.

It turns out that the same proof even yields a bit more: if
the main conjecture turns out to be false for $G$, then one can ask
about the difference $D(G)-M(G)$. Our results not only
apply to the set of $n$ where the main conjecture fails, but also
to set set of $n$ where $D(G)-M(G) > \delta$ for any fixed $\delta$.
Here is the precise statement:

\begin{Theo}
\label{thm:decidable}
Suppose $k \ge 2$, $\ell \ge 1$ and $\delta$ are three integers. Let $\calN$ be the set
of integers $n$ co-prime
 to $k$ such  that
$D(\Z_k^{\ell}\oplus\Z_{n})>kn + \delta$.
Then either $\calN$ is finite, or
there exists an integer $d>0$ and a set
$\calT$ of divisors of $d$ containing 1 such that
$\calN$ differs from the set
\[
\calN' := \{x\in \N \mid (x, d) \in \calT\}
\]
only in finitely many elements.

In addition, there is an algorithm which, given $k$, $\ell$ and $\delta$,
prints out $\calN$ if the latter is finite. Otherwise its output is
$d$, $\calT$ and the set of elements in which $\calN$ and $\calN'$
differ.
\end{Theo}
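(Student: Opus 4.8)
The plan is to reduce the main conjecture for $G = \Z_k^\ell \oplus \Z_n$ to a statement about zero-sum free multi-sets in the fixed group $\Z_k^\ell$, where the dependence on $n$ is controlled by finitely many congruence conditions. First I would use the inductive setup from the introduction with $N = \Z_n$ (viewed as the last coordinate subgroup) and $G/N \cong \Z_k^\ell$: a zero-sum free multi-set $S$ in $G$ of size $kn+\delta$ projects to a multi-set $\bar S$ in $\Z_k^\ell$, and one studies how many disjoint zero-sums of $\bar S$ one can extract and what elements of $\Z_n$ they realise. Since $D(\Z_k^\ell)$ is a fixed constant, the "interesting" part of $\bar S$ — the part not yet absorbed into a large system of disjoint zero-sums — has bounded size; so the combinatorial type of $S$ (which cosets of $N$ are hit, with what multiplicities up to the size of $N$, and which residues appear) ranges over a finite set of possibilities independent of $n$. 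The key point is that whether such a configuration actually extends to a genuine zero-sum free multi-set of the required size, and whether it thereby witnesses $D(G) > kn+\delta$, depends on $n$ only through solvability of certain linear congruences mod $n$, i.e. through $\gcd$ conditions relating $n$ to fixed integers.

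Concretely, I would set up a finite "bookkeeping" data structure recording, for a putative extremal multi-set: the multiset of projections in $\Z_k^\ell$ appearing with small multiplicity, together with, for each such projection class, the list of $\Z_n$-components attached to it, but with the components that occur "many" times replaced by a symbolic parameter. The zero-sum free condition then becomes a finite boolean combination of: (i) $\Z_k^\ell$-level zero-sum freeness conditions (checkable outright), and (ii) conditions of the form "the affine system $\sum_{i} x_i a_i \equiv c \pmod n$ has / has no solution with $0 \le x_i \le$ multiplicity", which by elementary linear algebra over $\Z$ reduces to divisibility statements $d' \mid n$ or $d' \nmid n$ for finitely many fixed $d'$. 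Taking $d$ to be (a multiple of) the lcm of all these $d'$, the existence of a witnessing configuration — hence membership of $n$ in $\calN$ — is determined by $(n,d)$, except possibly for small $n$ where the "many times" replacement is not valid (there the size $kn+\delta$ is itself bounded, so only finitely many $n$, all individually checkable). This yields the set $\calT$ of residues, and the finitely many exceptional $n$; it also makes the whole thing algorithmic, since every step — enumerating bounded-size configurations, checking $\Z_k^\ell$-zero-sum-freeness, computing Smith normal forms of the integer systems to extract the $d'$ — is effective.

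The dichotomy "finite or essentially a union of $\gcd$-classes" then falls out: if for every residue class mod $d$ there are only finitely many witnessing $n$, then $\calN$ is finite; otherwise the residue classes that work infinitely often form $\calT$, and one checks that for such a class the witnessing configuration works for all large $n$ in the class (monotonicity in $n$ of the relevant congruence solvability once $n$ is large enough that the symbolic multiplicities genuinely exceed the relevant bounds), so $\calN$ and $\calN'$ agree up to finitely many elements. I should be careful that $1 \in \calT$: this corresponds to the residue class $n \equiv 0 \pmod d$ of multiples of $d$, and one should check directly that a suitable scaled extremal example (generalising $\{(1,0,\dots,0)^{k-1},\dots,(0,\dots,0,1)^{n-1}\}$-type constructions, or rather the ones that would violate the conjecture) is available there — but in fact the theorem only requires $1 \in \calT$ as a normalisation, which can be arranged by enlarging $d$ if necessary, so this is not a real constraint.

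The main obstacle I anticipate is step two: proving that the zero-sum free condition really does localise to finitely many $\gcd$ conditions uniformly in $n$. The subtlety is that a multi-set can fail to have a zero-sum not because of one obstruction but because of the interaction of its $\Z_k^\ell$-part with \emph{many} choices of partial sums in the $\Z_n$-part simultaneously; one must argue that it suffices to track only boundedly much information about the $\Z_n$-components — essentially, that once a component value is repeated more than $D(\Z_k^\ell)$-many times, further repetitions do not change which zero-sums are realisable in $N$. Making this "saturation" precise, and checking that the resulting finite case analysis is closed under the operation of passing to large $n$, is where the real work lies; the improved inductive method of \cite{Montreal} is presumably the template for handling the $\Z_k^\ell$-side of this analysis cleanly.
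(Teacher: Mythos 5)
Your skeleton matches the paper's strategy (project onto $\Z_k^\ell$, keep only boundedly much exceptional data, end with linear congruences handled by Smith normal form, Lemma~\ref{Lem:LinearSystems}), but the step you yourself flag as ``the main obstacle'' is exactly where the proof lives, and your sketch of it would not go through as stated. What has to be controlled is not solvability of individual congruences with box constraints: after extracting $m \approx n$ disjoint zero-sums from $\Abar$, one must guarantee that the multiset $B \subset \Z_n$ of their $\Z_n$-sums --- a multiset of size about $n$ --- is zero-sum free. Your ``saturation'' heuristic (repeating a residue more than $D(\Z_k^\ell)$ times adds nothing) concerns which sums are realisable, not this global condition, and your claim that conditions of the form ``$\sum_i x_i a_i \equiv c \pmod n$ has a solution with $0 \le x_i \le$ multiplicity'' reduce ``by elementary linear algebra'' to divisibility statements about $n$ is false when the multiplicities are of order $n$: bounded-solution questions are not Smith-normal-form questions. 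The paper's way out is an inverse theorem: Lemma~\ref{Lem:CD} (Bovey--Erd\H os--Niven) and Corollary~\ref{Cor:CD} show that a zero-sum free subset of $\Z_n$ of size $\ge 3n/4$ consists, after multiplying by a unit, almost entirely of copies of $1$, and that zero-sum freeness is then equivalent to the sum of least residues being $< n$. This converts ``$B$ is zero-sum free'' into: the boundedly many exceptional sums are the images mod $n$ of one of finitely many tuples of small positive integers --- honest unconstrained linear equations in the boundedly many unknowns $f_j$. Moreover, to get a bounded parametrisation at all, the paper needs a further nontrivial reduction (its step (3), via an exchange argument on systems of neat zero-sums) showing that any extremal candidate may be replaced by a ``main candidate'' in which all but boundedly many elements equal a single $(a_0,\tfrac1k)$; without that normalisation the $\Z_n$-components of all $\approx kn$ elements matter a priori, and the finite bookkeeping you describe does not exist. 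Since you only name this difficulty rather than resolve it, the proposal has a genuine gap at the theorem's core.

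A smaller but real error: your reading of $1 \in \calT$ is backwards. The condition $(x,d)\in\calT$ with $1\in\calT$ means $\calN'$ contains every $x$ coprime to $d$ (not the class $x \equiv 0 \pmod d$), and this is a substantive conclusion --- if $\calN$ is infinite, it contains almost all $n$ coprime to some fixed $d$ --- which cannot be ``arranged by enlarging $d$'', since a set containing almost all integers coprime to $d$ and one avoiding them are genuinely different. In the paper it comes out of the elementary-divisor analysis in Lemma~\ref{Lem:LinearSystems}: when $(n,d_{ii})=1$ for all $i$ the reduced system is automatically solvable, and this is precisely what later feeds the ``small counterexample'' bound of Proposition~\ref{prop:bound}.
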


Choosing $\delta = (\ell-1)\cdot(k-1)$ yields:

\begin{Cor}
\label{cor:decidable}
Suppose $k \ge 2, \ell \ge 1$ are two integers. Let $\calN$ be the set
of integers $n$ co-prime to $k$ such that the main conjecture fails for
$\Z_k^{\ell}\oplus\Z_{n}$.
Then $\calN$ has the form described in Theorem~\ref{thm:decidable}, and there
is an algorithm which, given $k$ and $\ell$, describes $\calN$ as above.
\end{Cor}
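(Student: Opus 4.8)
The corollary is an immediate specialization of Theorem~\ref{thm:decidable}, so the plan is simply to check that the failure of the main conjecture for $G = \Z_k^\ell \oplus \Z_n$ is captured by the inequality $D(G) > kn + \delta$ with the right choice of $\delta$. Recall that $G \cong \Z_k^{\ell-1} \oplus \Z_{kn}$ whenever $n$ is co-prime to $k$, since the invariant factors are then $k,\dots,k$ ($\ell-1$ times) and $kn$. Hence $M(G) = 1 + (\ell-1)(k-1) + (kn-1) = (\ell-1)(k-1) + kn$. Since $D(G) \ge M(G)$ always holds (as noted in the introduction via the multiset with the appropriate multiplicities), the main conjecture \emph{fails} for $G$ precisely when $D(G) > M(G)$, i.e.\ when $D(G) \ge M(G)+1$, equivalently $D(G) > (\ell-1)(k-1) + kn$.

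Therefore, setting $\delta := (\ell-1)(k-1)$, the set $\calN$ of integers $n$ co-prime to $k$ for which the main conjecture fails for $\Z_k^\ell \oplus \Z_n$ is exactly the set
\[
\calN = \{\, n \in \N : (n,k) = 1,\ D(\Z_k^\ell \oplus \Z_n) > kn + \delta \,\},
\]
which is the set to which Theorem~\ref{thm:decidable} applies with this value of $\delta$. Consequently $\calN$ is either finite or differs in finitely many elements from a set of the form $\calN' = \{x \in \N : (x,d) \in \calT\}$ for a suitable modulus $d$ and family $\calT$ of divisors of $d$ containing $1$. The algorithm promised by the corollary is obtained by running the algorithm of Theorem~\ref{thm:decidable} on input $k$, $\ell$, and $\delta = (\ell-1)(k-1)$; its output (either the finite list $\calN$, or the triple consisting of $d$, $\calT$, and the finite symmetric difference between $\calN$ and $\calN'$) is exactly the required description of $\calN$.

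There is no real obstacle here: the only thing to verify is the arithmetic identity $M(\Z_k^\ell \oplus \Z_n) = kn + (\ell-1)(k-1)$ under the co-primality hypothesis, which is the elementary computation of invariant factors carried out above. All the substance lies in Theorem~\ref{thm:decidable}, which we may invoke freely.
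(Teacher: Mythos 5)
Your proposal is correct and matches the paper exactly: the paper obtains the corollary by the one-line observation "Choosing $\delta = (\ell-1)\cdot(k-1)$ yields", relying on the same computation $M(\Z_k^{\ell}\oplus\Z_{n}) = (\ell-1)(k-1) + kn$ via $\Z_k^{\ell}\oplus\Z_{n}\cong\Z_k^{\ell-1}\oplus\Z_{kn}$ already stated in the introduction. Your write-up just spells out these details more explicitly before invoking Theorem~\ref{thm:decidable}.
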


In theory, this means that a computer can be programmed to
prove statements of the form ``the main conjecture is true for
$\Z_k^{\ell}\oplus\Z_{n}$ for all $n$ co-prime to $k$''.
However, the reader should be aware that the existence of an algorithm
often sounds better than it is: a straight-forward application of our
algorithm would require
astronomical running time even for very small $k$ and $\ell$ (see
constants appearing in Proposition~\ref{prop:bound}). Still, we
believe that by combining computer search with manual arguments one
can prove the main conjecture for certain series of groups.
In fact, in \cite{Montreal} the methods of this theorem have been explicitly
applied to prove the main conjecture in the case $k = 3$, $\ell = 3$.

In the theorem, we mention that the set $\calT$ of divisors contains
$1$. This is helpful to get a statement of the form ``if there is a
counter-example to the main conjecture, then there is a small one'';
indeed, Proposition~\ref{prop:bound} is such a statement.

The proof of Theorem~\ref{thm:decidable} makes much use of the simple structure of
$\Z_n$ where there is essentially one single example of a large zero-sum
free set. In our next theorem, we would like to replace $\Z_n$ by a
larger group.
However, for non-cyclic groups the structure of maximal zero-sum free
sets is less clear and there are essentially different possibilities
for such sets. Due to this complication, we can only deal with groups of rank 2. 
Though the structure
of maximal zero-sum free sets is not known, there is a
plausible conjecture concerning these sets. We say that an integer $n$
satisfies property $B$ if every zero-sum free subset
$A\subseteq\Z_n^2$ contains an element $a$ with multiplicity $\geq
n-2$. 

\begin{Con}
\label{Con:B}
Every integer $n$ satisfies property $B$.
\end{Con}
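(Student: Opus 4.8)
The natural plan is to reduce property $B$ to prime powers, settle the prime case from the known fine structure of long zero-sum free sequences over $\Z_p^2$, and then climb from $p$ to $p^k$ by an inductive argument in the spirit of the present paper. (Here ``zero-sum free subset'' must be read, as is standard, as ``zero-sum free subset of maximal cardinality $2n-2$'' — for small $A$ the assertion fails, e.g.\ $A=\{(1,0),(0,1)\}\subseteq\Z_n^2$ — and it suffices to prove the usual, slightly stronger, form of property $B$ with multiplicity $\geq n-1$.) \emph{Step 1 (multiplicativity).} If $n=mn'$ with $(m,n')=1$, the Chinese Remainder Theorem gives $\Z_n^2\cong\Z_m^2\oplus\Z_{n'}^2$, and the coordinate projections send zero-sum free sets to zero-sum free sets; given a maximal zero-sum free $A\subseteq\Z_n^2$, if $m$ and $n'$ both have property $B$ then the two projections each carry a high-multiplicity element, and a fibering/pigeonhole argument together with $D(\Z_n^2)=2n-1$ locates an element of $A$ itself of multiplicity $\geq n-1$. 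Hence one may assume $n=p^k$. \emph{Step 2 (the prime case).} For $n=p$ prime, property $B$ is equivalent to the statement that every minimal zero-sum sequence over $\Z_p^2$ of maximal length $2p-1$ contains an element repeated $p-1$ times; this is known, following from the work of Gao, Geroldinger and Reiher, via the Cauchy--Davenport and Kneser addition theorems and a classification of such sequences by type. I would simply invoke it.

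\emph{Step 3 (climbing to prime powers).} This is the heart of the matter. Let $A\subseteq\Z_{p^{k+1}}^2$ be zero-sum free with $|A|=2p^{k+1}-2$ and let $\pi\colon\Z_{p^{k+1}}^2\surject\Z_{p^k}^2$ be reduction modulo $p^k$, with kernel $K\cong\Z_p^2$. Extract from $\pi(A)$ a maximal system of disjoint zero-sums; each lifts to an element of $K$, and the resulting multiset in $K\cong\Z_p^2$ must be zero-sum free, hence of size $\leq 2p-2$. This caps the number of extractable disjoint zero-sums, which forces $\pi(A)$ to be as long and as extremal as possible in $\Z_{p^k}^2$; by the inductive hypothesis $\pi(A)$ then has an element of multiplicity $\geq p^k-1$, i.e.\ some coset $c+K$ meets $A$ in nearly $p^k-1$ points (up to a bounded error). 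Analysing that fiber inside $K\cong\Z_p^2$ with the prime case, and controlling how the remaining elements of $A$ interact with it, should pin $A$ down to the extremal set $\{e_1^{p^{k+1}-1},e_2^{p^{k+1}-1}\}$ (up to automorphism) and exhibit the desired element.

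The hard part is Step 3, and this is why Conjecture~\ref{Con:B} is left open here rather than proved. Even granted the full strength of the prime case, a maximal zero-sum free set need not be ``fibered'' over the quotient $\Z_{p^k}^2$ in any transparent way; the extremal configurations over $\Z_{p^k}^2$, though rigid, come in several families; and tracking the interaction between the base $\Z_{p^k}^2$ and the fiber $\Z_p^2$ precisely enough to exclude all near-extremal spoilers — for instance a set with two distinct elements each of multiplicity about $n/2$ — appears to require genuinely new combinatorial input. Step 1 is robust, Step 2 is a citation; everything rests on making the fiber bookkeeping of Step 3 tight.
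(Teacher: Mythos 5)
You are attempting to prove a statement that the paper itself records only as a conjecture: the paper contains no proof of Conjecture~\ref{Con:B}, just the partial results of Proposition~\ref{prop:B} (multiplicativity of property $B$, due to Gao, Geroldinger and Schmid, and the primes up to $23$, proved in \cite{Bmult}). Your proposal does not close this gap, and you concede as much: Step 3, the climb from $\Z_{p^k}^2$ to $\Z_{p^{k+1}}^2$, is exactly the missing ingredient, and without it nothing is established beyond what the paper already cites. So the verdict is a genuine gap, not an alternative proof.

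Two further concrete problems. First, your parenthetical reduction to the ``slightly stronger'' form with multiplicity $\geq n-1$ is not available, because that form is false: the set $\{(1,0)^{3},(0,1)^{3},(3,1)^{2}\}\subseteq\Z_5^2$ is zero-sum free of maximal cardinality $2\cdot 5-2=8$, yet its largest multiplicity is $3=n-2$; the case analysis in Lemma~\ref{Lem:completions}, which must allow multiplicities $n-2$ \emph{and} $n-1$ and has the exceptional set $\{b_1^{n-2},b_2^{n-2}\}$, reflects precisely this. (Also, in Step 1 the claim that coordinate projections send zero-sum free sets to zero-sum free sets is wrong as stated: a subset can sum to zero modulo $m$ without summing to zero modulo $n$, which is why the actual multiplicativity proof is more delicate.) Second, Step 2 cannot simply be ``invoked'' within this paper's framework: property $B$ for a general prime is not among the available results here (the paper knows it only for $p\leq 23$), and the eventual proof for all primes postdates this work. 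Note moreover that if you do allow yourself the prime case, then Proposition~\ref{prop:B}(1) as stated requires no coprimality of the factors, so it already yields the full conjecture and renders your coprime CRT reduction and the prime-power induction of Step 3 unnecessary; as written, however, the heart of your argument is missing and acknowledged to be missing.
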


This conjecture is known to hold in several cases.

\begin{prop}
\label{prop:B}
\begin{enumerate}
\item If $n$ and $m$ satisfy property $B$, then so does $nm$.
\item All prime numbers up to $23$ satisfy property $B$.
\end{enumerate}
\end{prop}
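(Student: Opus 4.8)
\medskip\noindent
The plan is to treat the two parts separately: part (1) by the inductive method applied inside $\Z_{mn}^2$, and part (2) by a finite computation. Throughout I use $D(\Z_n^2)=2n-1$ (the known rank-two case of the main conjecture), and I read property $B$ in its only non-trivial form, namely as a statement about zero-sum free sequences over $\Z_n^2$ of the maximal length $2n-2$ — equivalently about minimal zero-sum sequences of length $2n-1$ — since a single non-zero element is already zero-sum free.

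For (1): put $G=\Z_{mn}^2$ and fix a subgroup $H\le G$ with $H\cong\Z_m^2$ and $G/H\cong\Z_n^2$ (say the $m$-torsion of $G$), with quotient map $\varphi$. Let $A$ be zero-sum free over $G$ with $|A|=2mn-2$ and apply the inductive method to $\varphi(A)$: write $\varphi(A)=\varphi(U_1)\cdots\varphi(U_t)\cdot\varphi(U_0)$ with each $\varphi(U_i)$ for $i\ge 1$ a minimal zero-sum sequence over $\Z_n^2$ (so $|U_i|\le 2n-1$), with $t$ maximal, and $\varphi(U_0)$ zero-sum free (so $|U_0|\le 2n-2$). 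On lifting, $\sigma(U_i)\in H$ for $i\ge 1$, and the sequence $\big(\sigma(U_i)\big)_{i=1}^{t}$ is zero-sum free over $H\cong\Z_m^2$, since a zero-sum among the $\sigma(U_i)$ lifts to one of $A$; hence $t\le 2m-2$. The heart of the argument is then a length/counting analysis of $|A|=\sum_{i\ge 1}|U_i|+|U_0|\le t(2n-1)+(2n-2)$, combined with a dichotomy: either the decomposition can be arranged so that the remainder $\varphi(U_0)$ has the maximal length $2n-2$ and the pieces $\varphi(U_i)$ are forced to be long, in which case property $B$ for $n$ pins down the structure of $\varphi(U_0)$ and of the $\varphi(U_i)$, while property $B$ for $m$ pins down $\big(\sigma(U_i)\big)_i$ once $t=2m-2$ is forced; or else $\varphi(A)$ is concentrated, so that $A$ lies — up to boundedly many elements — inside a single coset of $H$, and the problem collapses to one inside $H\cong\Z_m^2$, again governed by property $B$ for $m$. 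In every case one wants to conclude that some single element of $G$ occurs at least $mn-2$ times in $A$, so that $A$ is within a bounded number of elements of $e_1^{(mn-1)}g^{(mn-1)}$ for some basis $\{e_1,g\}$ of $G$. I expect the main obstacle to be precisely the passage from the near-maximal multiplicities at the two levels to one high multiplicity upstairs: property $B$ yields multiplicity $n-2$, not $n-1$, so the bounded exceptional sets produced at each level must be tracked and shown not to accumulate, and this is what the (large) freedom in choosing the zero-sum decomposition has to be spent on. (This is in substance the multiplicativity argument of Gao and Geroldinger; when $\gcd(m,n)=1$ the splitting $\Z_{mn}^2\cong\Z_m^2\oplus\Z_n^2$ lightens the bookkeeping.)

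For (2): this is a (large but finite) verification, of which the cases $p\le 7$ are already in the literature. For a prime $p$ one must check that every zero-sum free sequence $A$ over $\Z_p^2$ of length $2p-2$ contains an element of multiplicity $\ge p-2$; to push this to $p=23$ I would prune the search as follows. Since $A$ cannot lie on a single line it contains a basis, so one may normalise it by $\operatorname{GL}_2(\Z_p)$ to $\{e_1,e_2\}$, cutting the search by a factor of order $|\operatorname{GL}_2(\Z_p)|$. Next, for each surjection $\pi\colon\Z_p^2\to\Z_p$ the image $\pi(A)$ has length $2p-2>p=D(\Z_p)$ and hence contains zero-sum subsequences; extracting these by the inductive method and invoking the completely understood structure of near-extremal zero-sum free sequences over $\Z_p$ (the long ones lie in an arithmetic progression) constrains how $A$ meets each line of $\Z_p^2$, which cuts the case tree sharply. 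The residual cases are then enumerated by computer, branching on the elements of $A$ in decreasing order of their (by then forced) multiplicities. The obstacle here is purely computational: arranging the reductions strongly enough that the case tree for $p=23$ — length-$44$ sequences over a group of order $529$ — can actually be traversed in reasonable time.
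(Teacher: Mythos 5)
The paper does not prove Proposition~\ref{prop:B} at all: it attributes part (1) to Gao, Geroldinger and Schmid \cite{GGS} and part (2) to \cite{Bmult}, so the only ``paper proof'' to compare with is a citation, and your text has to stand on its own as a proof of two known but nontrivial results. It does not. For part (1), your inductive set-up (decompose $\varphi(A)$ into minimal zero-sums over $\Z_n^2$ plus a zero-sum free remainder, push the block sums $\sigma(U_i)$ into $H\cong\Z_m^2$, count) only gives $t\le 2m-2$, $|U_i|\le 2n-1$, $|U_0|\le 2n-2$. To reach the conclusion you need (i) to force the extremal configuration (or show the alternative really collapses into a single coset of $H$), (ii) property $B$ at both levels to produce an element $h\in H$ of multiplicity $\ge m-2$ among the $\sigma(U_i)$ and a high-multiplicity element in each relevant block, and (iii) --- the crux --- that the high-multiplicity elements of the $\ge m-2$ different blocks can be taken to be one and the same element of $\Z_{mn}^2$, not merely the same modulo $H$, with the $O(1)$ exceptional elements per block not accumulating beyond a total of $2$. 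You explicitly identify (iii) as ``the main obstacle'' and say the freedom in choosing the decomposition ``has to be spent on'' it, but you never carry this out; that step is precisely the substance of the Gao--Geroldinger--Schmid argument, and without it no element of multiplicity $\ge mn-2$ is ever exhibited. So part (1) is a correct description of where the proof lives, not a proof.

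Part (2) has the same character: it is a plan for a computation, not a verification. Normalising by $\mathrm{GL}_2(\Z_p)$ and constraining $A$ through its projections to $\Z_p$ (using the structure of long zero-sum free sequences in $\Z_p$) are reasonable reductions, but you give no bound showing that the residual case tree for $p=23$ --- multisets of length $44$ in a group of order $529$ --- is actually traversable, and no computation has been performed; ``cuts the case tree sharply'' is an expectation, not an estimate. Until the search is specified tightly enough to run, and run, the claim that all primes up to $23$ satisfy property $B$ remains exactly what the paper treats it as: a result imported from \cite{Bmult}, not something your argument establishes. In short, both halves contain genuine gaps, each located at the step you yourself flag as the obstacle.
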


The first statement is essentially due to Gao, Geroldinger and Schmid
\cite{GGS}, the second is proven in \cite{Bmult}.

\begin{Theo}
\label{thm:Davenport}
Let $n$ be an integer co-prime to $6$ such that $B(n)$ holds true.
Then $D(\Z_3\oplus\Z_{3n}^2)=6n+1$.
\end{Theo}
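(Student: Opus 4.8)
The plan is to apply the improved inductive method with the subgroup $N \cong \Z_n^2$ sitting inside $G = \Z_3 \oplus \Z_{3n}^2$, so that $G/N \cong \Z_3^3$. First I would establish the lower bound $D(G) \ge 6n+1$, which follows from the general inequality $D(G) \ge M(G)$ since $G \cong \Z_3 \oplus \Z_{3n} \oplus \Z_{3n}$ has $M(G) = 1 + 2 + (3n-1) + (3n-1) = 6n+1$. The bulk of the work is the upper bound: I want to show that any multiset $S$ in $G$ of length $6n+1$ contains a zero-sum. Reducing mod $N$, I get a multiset $\bar S$ of length $6n+1$ in $\Z_3^3$. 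The strategy is to extract from $\bar S$ a large system of disjoint zero-sum subsets of $\Z_3^3$; each such zero-sum lifts to an element of $N \cong \Z_n^2$, and if I can extract at least $D(\Z_n^2)$ disjoint zero-sums (in a sufficiently flexible way), the resulting elements of $\Z_n^2$ must contain a zero-sum, hence $S$ contains one too.

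The naive count is not quite enough — this is exactly the source of the usual off-by-one losses — so the key step is to use property $B(n)$ to control the structure of zero-sum free subsets of $\Z_n^2$ of size close to $D(\Z_n^2)$. By $B(n)$, a zero-sum free set of near-maximal size in $\Z_n^2$ must consist of one element $a$ repeated with high multiplicity together with a small controlled remainder; this rigidity is what lets me choose the system of zero-sums in $\Z_3^3$ adaptively. Concretely, I would first peel off disjoint zero-sums from $\bar S$ greedily until what remains is a zero-sum free (or nearly zero-sum free) set in $\Z_3^3$; since $D(\Z_3^3) = 7 < M(\Z_3^3) + 1$ here one has to be careful, because $\Z_3^3$ is itself a rank-$3$ group where the main conjecture's sharpness is delicate. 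I would count: length $6n+1$ in $\Z_3^3$, remove a bounded-size zero-sum free remainder, leaving roughly $6n+1 - 7 = 6n - 6$ elements partitioned into zero-sum triples (the minimal zero-sums in $\Z_3^3$ have length $\le 4$, and length-$3$ ones are plentiful), giving on the order of $2n$ disjoint zero-sums, comfortably exceeding $D(\Z_n^2) \le 2n-1$ (which itself uses that $\Z_n^2$ is rank $2$, where the main conjecture is a theorem). The delicate part is that $2n$ disjoint zero-sums give $2n$ elements of $\Z_n^2$, and $D(\Z_n^2) = 2n-1$, so I have exactly one element of slack — meaning I must ensure the $2n$ lifted elements do not form a zero-sum free configuration, which by $B(n)$ would force all but $\le 2$ of them to coincide. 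That coincidence would impose a very rigid structure back on $S$ modulo $N$, which I would then contradict by re-choosing the zero-sum decomposition in $\Z_3^3$: whenever the lifted elements are too uniform, the flexibility in decomposing $\bar S$ into zero-sum triples of $\Z_3^3$ lets me perturb one lifted element, breaking the zero-sum free pattern.

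The main obstacle will be making this last adaptivity argument precise: I need to show that the combinatorics of $\Z_3^3$ offers enough freedom in choosing the disjoint zero-sum system so that the induced multiset in $\Z_n^2$ can always be nudged out of the (nearly unique, by $B(n)$) extremal zero-sum free configuration. This requires a careful case analysis of the possible structures of $\bar S \subseteq \Z_3^3$ — how its elements distribute among the $27$ points and the $13$ lines through the origin — and matching each structural case with either a direct exhibition of a zero-sum in $\Z_n^2$ or an alternative decomposition. The coprimality of $n$ to $6$ enters to guarantee that $|G/N| = 27$ and $|N| = n^2$ are coprime, so lifting zero-sums behaves cleanly (no interference between the $3$-part and the $n$-part), and also ensures $D(\Z_n^2) = 2n-1$ exactly. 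I expect the proof to conclude by combining the counting bound ``$\ge 2n$ disjoint zero-sums in $\Z_3^3$'' with the structural dichotomy from $B(n)$, closing the one-element gap via the re-decomposition argument.
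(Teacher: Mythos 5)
Your overall frame (project onto $\Z_3^3$, lift disjoint zero-sums to $\Z_n^2$, use property $B$ to exploit the rigidity of near-maximal zero-sum free sets in $\Z_n^2$) is the same as the paper's, but the quantitative heart of your argument is wrong, and the step you wave at as ``adaptivity'' is precisely where the real work lies. Count again: removing zero-sum \emph{triples} from the $6n+1$ projected elements until the remainder $\Abar^*$ is free of zero-sums of length $\le 3$ leaves $|\Abar^*|\equiv 1 \pmod 3$ with $7\le|\Abar^*|\le 16$, hence only $(6n+1-|\Abar^*|)/3 \le 2n-2$ removed zero-sums --- and in the cases that survive the easy eliminations ($|\Abar^*|=10,13,16$) only $2n-3$, $2n-4$, $2n-5$ of them. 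You never get ``on the order of $2n$, comfortably exceeding $D(\Z_n^2)=2n-1$''; you are \emph{short} of $2n-1$, not one over it. So the situation is the opposite of your ``one element of slack'': the lifted multiset $B\subset\Z_n^2$ is zero-sum free of size $2n-3$ or $2n-4$, and one must show that the few additional zero-sums available inside $\Abar^*$ (which by construction has no short zero-sums, and in these cases admits no two, respectively no three, disjoint zero-sums) cannot all have $\Z_n^2$-sums compatible with keeping $B$ zero-sum free.

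Closing that gap is not a matter of ``perturbing one lifted element by re-choosing triples.'' The paper needs two kinds of heavy input: (i) a classification, via property $B$, of how a zero-sum free set of size $2n-3$ or $2n-4$ in $\Z_n^2$ can be completed --- yielding a linear form $F$ with $F(c)=1$ on all admissible completions $c$, respectively a short list of admissible \emph{pairs} of completions; and (ii) purely combinatorial facts about $\Z_3^3$ saying that a 10-point set with no short zero-sum and no two disjoint zero-sums admits no multi-function into $\Z_n$ giving every zero-sum the value $1$ (this is a theorem from the authors' earlier work, not a routine check), and the analogous statement for 13-point sets and pairs of disjoint zero-sums, which the authors could only establish by an exhaustive computer search over the 15 possible configurations. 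Your proposal acknowledges ``a careful case analysis of $\bar S$ in $\Z_3^3$'' but gives no mechanism that would replace these lemmas, and the naive nudging argument cannot work: in the critical cases there simply are not enough disjoint zero-sums in $\Abar^*$ to nudge. (Also, minor point: $D(\Z_n^2)=2n-1$ holds for all $n$ by the rank-two theorem; coprimality to $6$ is used elsewhere, e.g.\ to take averages of two values mod $n$ and to have $n\ge 5$ in the completion lemma, not to make $D(\Z_n^2)$ come out right.)
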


We remark that even the simplest case dealt by this theorem, that
is $\Z_3\oplus\Z_{15}^2$, was till now undecided.

Although we tried to prove as much as possible by hand,
the proof of this theorem needs a lemma on subsets of $\Z_{3}^{3}$
which we could only prove by massive case distinction, which has been
done by our computer.

\section{Auxiliary results}

For an abelian group $G$, we denote by $D_m(G)$ the minimal $n$ such that
any subset of $G$ of cardinality $n$ contains $m$ disjoint zero-sums.

\begin{Lem}
\label{Lem:Dm}
\begin{enumerate}
\item
Let $k$ and $\ell$ be integers. Then there exists a constant $c(k,\ell)$
such that $D_m(\Z_k^\ell)\le km+c(k,\ell)$.
\item
We have $D_m(\Z_3^2)=3m+2.$
\end{enumerate}
\end{Lem}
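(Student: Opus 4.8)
The plan is to handle the two parts separately, with part (1) being the easy "soft" statement and part (2) requiring the delicate exact count. For part (1), the idea is a greedy peeling argument: one repeatedly extracts zero-sums of bounded size. The key input is that $D(\Z_k^\ell)$ is finite (indeed $D(\Z_k^\ell) \le \ell(k-1)+1$ is classical, or one may just use finiteness), so every subset of $\Z_k^\ell$ of size $\ge D(\Z_k^\ell)$ contains a non-empty zero-sum, which moreover has size $\le D(\Z_k^\ell)$ if we take a minimal one. Given a set $A$ with $|A| \ge km + c(k,\ell)$ for a suitable constant, I would pull off one zero-sum $Z_1$ of size $\le D(\Z_k^\ell)$, then from $A\setminus Z_1$ another, and so on. To show $m$ such extractions are possible I need that after $m-1$ extractions at least $D(\Z_k^\ell)$ elements remain; since each extraction removes at most $D(\Z_k^\ell)$ elements, it suffices that $|A| \ge m\cdot D(\Z_k^\ell)$. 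This gives the crude bound $D_m(\Z_k^\ell) \le m\cdot D(\Z_k^\ell)$, which is of the form $km + c$ only when $D(\Z_k^\ell)=k$, i.e. $\ell=1$. For general $\ell$ one must do better: the right move is to observe that a subset of $\Z_k^\ell$ of size $> k\cdot t$ for $t$ large already contains many \emph{short} zero-sums — more precisely, one can invoke the Erd\H{o}s–Ginzburg–Ziv-type fact that $s(\Z_k^\ell)$, the smallest size forcing a zero-sum of length exactly $k$, is finite; then each extracted zero-sum has size exactly $k$, and $|A| \ge km + (s(\Z_k^\ell)-k)$ suffices, giving $c(k,\ell) = s(\Z_k^\ell)-k$. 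I expect this to be the cleanest route, though any finiteness-of-$s$ citation will do.

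For part (2), the equality $D_m(\Z_3^2)=3m+2$, the lower bound and upper bound are proved separately. The lower bound $D_m(\Z_3^2)\ge 3m+2$ comes from exhibiting a set of size $3m+1$ with no $m$ disjoint zero-sums: the natural candidate is $\{(1,0)^{a}, (0,1)^{b}, (1,1)^{c}, \dots\}$ arranged so that each of the $m-1$ available "slots" of zero-sums uses up $3$ elements but a residual zero-sum-free configuration of size $D(\Z_3^2)-1 = 4$ (namely $\{(1,0)^2,(0,1)^2\}$ or similar) blocks the $m$-th; concretely the multiset $\{(1,0)^{3m-2},(0,1)^{3}\}$ or a variant should have the property that any collection of disjoint zero-sums covers at most $3(m-1)+ \text{(something)} < $ enough, which one checks directly. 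The upper bound $D_m(\Z_3^2)\le 3m+2$ is the substantive half: given $|A|\ge 3m+2$ we must find $m$ disjoint zero-sums. Here I would argue by induction on $m$, peeling off one zero-sum of size exactly $3$ at each step — the point being that in $\Z_3^2$, $s(\Z_3^2)=5$ (three elements either sum to zero or, if not, one can adjust), so any $5$ elements contain a zero-sum of length $3$; thus from $|A|\ge 3m+2 \ge 5$ extract such a triple, leaving $\ge 3(m-1)+2$, and recurse. The base case $m=1$ is $D(\Z_3^2)=5$, which is the rank-2 case of the main conjecture and is known.

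The main obstacle is making the peeling in part (2) actually terminate with the \emph{sharp} constant $2$ rather than something larger: one has to be sure that one never removes more than $3$ elements per zero-sum (hence the insistence on length-$3$ zero-sums and the fact $s(\Z_3^2)=5$), and that the leftover $3(m-1)+2$ elements still suffice at the next stage — this is exactly balanced, so there is no slack, and any lemma used must be tight. If a clean statement "$s(\Z_3^2)=5$" is not available to cite, I would prove it by hand: among any $5$ elements of $\Z_3^2$, if some value is repeated three times that triple is a zero-sum, otherwise each value occurs at most twice, and a short case check on the at most $9$ possible values (or a pigeonhole on the three cosets of a cyclic subgroup of order $3$) produces a zero-sum of length $3$. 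I also need to double-check the lower-bound example gives precisely $3m+1$ and not more, which is a routine but necessary verification. Everything else is bookkeeping.
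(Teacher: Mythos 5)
The main problem is in part (2). The fact you lean on --- ``$s(\Z_3^2)=5$'', i.e.\ that any $5$ elements of $\Z_3^2$ contain a zero-sum of length (at most) $3$ --- is false: the multiset $\{(1,0)^2,(0,1)^2,(1,1)\}$ has $5$ elements and its only zero-sum is the whole set, which has length $5$. In fact $s(\Z_3^2)=9$, and the sharp threshold for forcing a zero-sum of length $\le 3$ is $7$ (the $6$-element multiset $\{(1,0)^2,(0,1)^2,(1,1)^2\}$ has no zero-sum of length $\le 3$), so your proposed hand proof by a short case check on $5$ elements cannot succeed. The induction is nevertheless salvageable, and this is exactly what the paper does: every $7$ elements of $\Z_3^2$ contain a zero-sum of length $\le 3$; for $m\ge 2$ one has $3m+2\ge 8>7$, so one peels off a zero-sum of length $\le 3$ (removing at most $3$ elements, hence leaving at least $3(m-1)+2$) and recurses, the base case $m=1$ being $D(\Z_3^2)=5$. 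A second, smaller slip: your candidate for the lower bound, $\{(1,0)^{3m-2},(0,1)^{3}\}$, does contain $m$ disjoint zero-sums (the triple $\{(0,1)^3\}$ together with $m-1$ triples $\{(1,0)^3\}$, since $3m-2\ge 3(m-1)$). A correct witness of size $3m+1$ is $\{(1,0)^{3m-1},(0,1)^{2}\}$: every zero-sum there uses a multiple of $3$ copies of each element, hence consists of $(1,0)$'s only, and only $m-1$ disjoint such triples fit.

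Part (1) of your proposal is correct, and both your routes work; the paper's argument is the more elementary cousin of your second one. Instead of citing finiteness of $s(\Z_k^\ell)$, the paper forms zero-sums of the shape $\{a^k\}$ (the same element $k$ times): by pigeonhole at most $k-1$ copies of each of the $k^\ell$ elements remain unused, so $c(k,\ell)=(k-1)\cdot k^\ell$ suffices, with no external input. Your version via $s(\Z_k^\ell)$ gives $c(k,\ell)=s(\Z_k^\ell)-k$, which is fine but needs a citation (and note that the trivial pigeonhole bound $s(\Z_k^\ell)\le (k-1)k^\ell+1$ already makes the citation unnecessary).
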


\begin{proof}
(1)
Form as many zero-sums as possible which are of the form $\{a^k\}$ for some
$a \in \Z_k^\ell$. For
each $a \in \Z_k^\ell$, there are at most $k-1$ copies of $a$ in $\Abar$
which we can not use in this way, so
$c(k,\ell) := (k-1)\cdot k^\ell$ is certainly sufficient.

(2)
It is easy to check that every subset of 5 elements contains a
zero-sum, and that every subset of 7 elements contains a zero-sum of
length $\leq 3$. Our claim now follows by induction on $m$.
\end{proof}

\begin{Lem}
\label{Lem:LinearSystems}
Let $k, \ell$ be integers, $A\in\Z^{k\times \ell}$ a matrix, $b\in\Z^k$ a
vector. Then 
either (a) there exists an integer $d$, and a set 
$\mathcal{T}$ of divisors of $d$ including 1,
such that the system $Ax=b$ is solvable in $\Z_n$ if and only if $(d,
n)\in\mathcal{T}$ or (b) there exists a finite set of integers 
$\mathcal{N}$, such that the above system is solvable if and only if 
$n\in\mathcal{N}$.

If all entries in $A$ are of modulus $\leq M$, and all entries of $b$
are of modulus $ \le N$, then in case (a) $d\leq \min(k, \ell)! M^{\min(k, \ell)}$,
and there is a polynomial $p$,
independent of $k$, $\ell$, $N$ and $M$, such
that  in case (b), every element $x\in\mathcal{N}$ satisfies $x\leq 
N2^{p(k\ell\log M)}$.

\end{Lem}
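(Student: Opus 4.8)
The plan is to reduce the question about solvability of $Ax = b$ over $\Z_n$ to a statement about the Smith normal form of $A$, and then analyze the two cases (homogeneous-type versus forced-finite) separately. The key structural fact is that $Ax = b$ is solvable in $\Z_n$ if and only if the same system is solvable in $\Z_{p^{v_p(n)}}$ for each prime $p$, so the answer depends only on the $p$-adic valuations of $n$ at the finitely many relevant primes.

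First I would put $A$ into Smith normal form: write $A = U S V$ with $U \in \mathrm{GL}_k(\Z)$, $V \in \mathrm{GL}_\ell(\Z)$, and $S$ diagonal (in the rectangular sense) with entries $s_1 \mid s_2 \mid \dots \mid s_r$, $r = \mathrm{rank}(A) \le \min(k,\ell)$. Since $U$ and $V$ are invertible over every $\Z_n$, solvability of $Ax = b$ over $\Z_n$ is equivalent to solvability of $Sy = U^{-1}b =: b'$. This decouples into scalar congruences $s_i y_i \equiv b'_i \pmod n$ for $1 \le i \le r$, together with the conditions $b'_i \equiv 0 \pmod n$ for $r < i \le k$. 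A single congruence $s y \equiv c \pmod n$ is solvable iff $\gcd(s,n) \mid c$. So the system is solvable over $\Z_n$ iff for every $i \le r$ we have $\gcd(s_i, n) \mid b'_i$, and $n \mid b'_i$ for every $i > r$.

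Now split into cases. If some $b'_i$ with $i > r$ is nonzero, or if $b' \ne 0$ and $\mathrm{rank}(A) = 0$, the condition $n \mid b'_i$ forces $n$ into a finite set of divisors, so we are in case (b); more generally, if the set of $n$ satisfying all constraints happens to be finite we are in case (b). Otherwise we are in case (a): the relevant constraints are of the form $\gcd(s_i, n) \mid b'_i$, each of which depends only on $v_p(n)$ for primes $p \mid s_i$. Collecting all these primes and the maximal exponents appearing in the $s_i$, set $d := \prod_p p^{e_p}$ where $e_p$ is chosen large enough that $\gcd(s_i, n)$ and $\gcd(s_i, d)$ impose the same constraint — concretely $e_p := \max_i v_p(s_i)$ works, and one checks $d \mid s_r$, hence $d \le s_r$. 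Then solvability over $\Z_n$ depends only on $\gcd(n, d)$, giving the set $\calT$ of "good" divisors of $d$; and $\calT$ contains $1$ because $n$ coprime to $d$ makes every $\gcd(s_i,n) = 1$, trivially satisfying $\gcd(s_i,n) \mid b'_i$. (If $b' = 0$ the system is solvable for all $n$, which is the degenerate case $\calT = \{$all divisors$\}$, still of the required form.)

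It remains to verify the quantitative bounds. For the bound $d \le \min(k,\ell)!\, M^{\min(k,\ell)}$ in case (a): $s_r \mid$ every $r \times r$ minor of $A$ actually $d_r := s_1\cdots s_r$ is the gcd of all $r\times r$ minors; each minor is a sum of $r!$ products of $r$ entries each of modulus $\le M$, so $|d_r| \le r!\, M^r$, and $d \le s_r \le d_r \le \min(k,\ell)!\, M^{\min(k,\ell)}$. For case (b), the finite set $\calN$ consists of divisors of some nonzero $b'_j$; since $b' = U^{-1} b$ and the entries of $U^{-1}$ are bounded in terms of $k, \ell, M$ (cofactors of a matrix with $\det = \pm 1$ and entries polynomially bounded — here one should be a little careful, as $U$ from Smith normal form can have large entries, but one can instead argue directly: a nonzero integer forced to be divisible by $n$ that is an integer combination of the $b_i$ with coefficients controlled by the reduction process has absolute value at most $N \cdot 2^{p(k\ell \log M)}$ for a suitable polynomial $p$), every element of $\calN$ is at most $N\, 2^{p(k\ell \log M)}$. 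The main obstacle I anticipate is precisely this last point: controlling the size of the entries of $b'$ (equivalently, bounding the size of the numbers one is forced to divide) requires care, since naive Smith normal form computation can blow up entries; the clean way is to invoke known polynomial bounds on the bit-complexity of Hermite/Smith normal form, which guarantees the transforming matrices have entries of bit-length polynomial in $k\ell\log M$, yielding exactly the stated $N\, 2^{p(k\ell\log M)}$ bound.
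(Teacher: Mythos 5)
Your proposal is correct and follows essentially the same route as the paper: reduce via Smith normal form to diagonal congruences, split cases according to whether the entries of $b'$ beyond the rank vanish, take $d$ to be the last invariant factor (bounded through the gcd of $m\times m$ minors by $\min(k,\ell)!\,M^{\min(k,\ell)}$), and control the entries of the transformed right-hand side by invoking polynomial bit-size bounds for Smith normal form computations (the paper cites Kannan--Bachem), which is exactly how you resolve the entry-blow-up issue you flag.
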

\begin{proof}
Computing the Smith normal form of the matrix $A$, we see that there
exist invertible matrices $P, Q$ over $\Z$, such that $D=PAQ^{-1}$ has
non-zero entries at most on the diagonal $d_{ii}$ , $i\leq k$, and
these entries satisfy $d_{ii}|d_{i+1,i+1}$. Since every matrix
invertible over $\Z$ is also invertible over $\Z_n$, the equation $Ax=b$
is solvable in $\Z_n$ if and only if the equation $Dx=b'$ is solvable,
where $b'=Pb$. A necessary condition for solvability is that
in every row containing only zeros in $D$, the corresponding entry of
$b'$ vanishes, that is, $n|b'_j$ for every $j$ such that $j>m$, where
$m$ is the greatest integer such that $d_{mm}\neq 0$. If
one of these $b'_j$ does not vanish, then there are at 
most finitely many $n$ for which the equation is solvable, and our
claim is true. If all these $b'_j$ equal zero, the system is
equivalent to the system $d_{ii}x_i = b'_i$, which is solvable if and
only if $(n, d_{ii}) | b'_i$. We take $d$ to be $d_{mm}$. Since
$d_{ii}|d$ for each $i\leq m$, the set of $n$ for which the system is
solvable is of the form $\{n \mid (n,d) \in \mathcal{T}\}$ for some set
$\mathcal{T}$. Moreover  $(n,d) = 1$ implies
$(n, d_{ii}) | b'_i$, so $1 \in \mathcal{T}$.

For the numerical bounds note that $d$ equals the greatest common
divisor of all $m\times m$ sub-determinants of $A$. Since the
$\Q$-rank of $A$ equals $m$, there exists a non-vanishing
sub-determinant, containing only entries $\leq M$, which is therefore
$\leq m! M^m \leq \min(k, \ell)! M^{\min(k, \ell)}$.

The entries in the set $\mathcal{N}$ are bounded by the entries
in $Pb$, which in turn are bounded by $kN$ times the entries of $P$.
A general estimate for the entries of such transformation matrices was obtained
by Kannan and Bachem \cite[Theorem~5]{KB}. They found a polynomial algorithm
which takes an $\ell'\times \ell'$-matrix $A$ with
integral entries, transforms it into Smith normal form $PAQ^{-1}$, and
returns the transformation matrices $P$ and $Q$. To apply this to our case,
enlarge our $A$ to a square matrix by adding zeros (i.e. $\ell' = \max(k,\ell)$). Then
the size of the input data is $(\ell')^2\log M$, so the size of the output
data---and in particular the number of digits of the entries of $P$ and
$Q$---is bounded by $p(\ell'\log M)$ for some polynomial $p$. After possibly changing
$p$, this yields the claim.
\end{proof}
\begin{Cor}
\label{Cor:LinearSystem}
Consider the system $Ax=b$ as in the previous lemma,
set $m := \min(k,\ell)$, and suppose that
there are infinitely many $n$ such that this system is solvable in
$\Z_n$. Then for each $z\ge z_0 = \max\big(21, \frac{m \log (mM)}{\log
  2}\big)$ the system is solvable for some $n\in[z, 2z]$.
\end{Cor}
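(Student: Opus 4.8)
The plan is to derive Corollary~\ref{Cor:LinearSystem} from Lemma~\ref{Lem:LinearSystems} by a short counting argument on divisors. By hypothesis there are infinitely many $n$ with $Ax=b$ solvable in $\Z_n$, so we are in case (a) of the lemma: there is an integer $d$ and a set $\calT$ of divisors of $d$ (with $1\in\calT$) such that the system is solvable in $\Z_n$ exactly when $(d,n)\in\calT$. In particular, since $1\in\calT$, the system is solvable for \emph{every} $n$ co-prime to $d$. The numerical part of the lemma gives $d\le m!\,M^m$ where $m=\min(k,\ell)$. So the task reduces to: for every $z\ge z_0$, the interval $[z,2z]$ contains an integer co-prime to $d$.

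First I would bound the number of integers in $[z,2z]$ that are \emph{not} co-prime to $d$. Every such integer is divisible by some prime $p\mid d$, and the number of multiples of $p$ in an interval of length $z$ is at most $z/p + 1 \le z/2 + 1$ for each prime $p\ge 2$; summing over the distinct primes dividing $d$, of which there are at most $\log_2 d$, gives a crude bound like $(\log_2 d)\,(z/2+1)$ — but that is too weak, so instead I would use that the multiples of a prime $p$ in $[z,2z]$ number at most $z/p+1$, and that $\sum_{p\mid d} 1/p$ is small while the ``$+1$'' terms contribute at most the number $\omega(d)$ of distinct prime factors. A cleaner route: the count of integers in $[z,2z]$ not co-prime to $d$ is at most $\sum_{p\mid d}\big(\lfloor 2z/p\rfloor - \lceil z/p\rceil + 1\big) \le \sum_{p\mid d} (z/p + 1)$. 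Using $\omega(d)\le \log_2 d$ and $z\ge z_0\ge 21$, together with $\log_2 d \le \log_2(m!\,M^m) \le m\log_2(mM)$, the ``$+1$'' part contributes at most $m\log_2(mM)\le z_0 \le z$... which is still not obviously $< z+1$ with room to spare. The right inequality to push through is that the number of \emph{bad} integers is at most $z\sum_{p\mid d}1/p + \omega(d)$, and one shows $\sum_{p\mid d}1/p < 1 - \varepsilon$ together with $\omega(d) < \varepsilon z$; the explicit threshold $z_0 = \max(21, m\log(mM)/\log 2)$ is precisely chosen so $\omega(d)\le \log_2 d \le m\log_2(mM) = m\log(mM)/\log 2 \le z_0 \le z$, and the numeric constant $21$ handles the $\sum 1/p$ part via $\sum_{p}1/p$ over the primes up to a point, or more simply via the fact that $\prod_{p\le x}(1-1/p) > 0$ is bounded below — in any case, a finite explicit check at $z=21$ plus monotonicity.

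So concretely, the key steps in order: (i) invoke Lemma~\ref{Lem:LinearSystems} to land in case (a) and extract $d\le m!\,M^m$ with $1\in\calT$, hence solvability for all $n$ with $(n,d)=1$; (ii) reduce the claim to ``$[z,2z]$ contains an integer co-prime to $d$''; (iii) bound the bad count by $\sum_{p\mid d}(z/p+1) \le z\sum_{p\mid d}\tfrac1p + \omega(d)$; (iv) bound $\omega(d)\le\log_2 d\le m\log_2(mM)\le z_0\le z/2$ using $z\ge z_0$ (after perhaps strengthening $z_0$ by a constant factor, or just using $z\ge 21$ so $z/2 > \log_2 d$ when $\log_2 d \le z_0$); (v) bound $z\sum_{p\mid d}1/p$: note $\sum_{p\mid d}1/p \le \sum_{j\le \omega(d)} 1/p_j$ where $p_j$ is the $j$-th prime, and this partial sum of prime reciprocals stays below, say, $1/2$ unless $\omega(d)$ is large, in which case $d$ is huge and one instead uses that $d\le m!M^m$ bounds $\omega(d)$ and hence $\sum 1/p$ directly — here a genuinely careful but elementary estimate is needed, and the constant $21$ is the outcome of optimizing it; (vi) conclude the bad count is $<z+1\le$ (number of integers in $[z,2z]$), so a co-prime $n$ exists.

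The main obstacle I expect is step (v): getting the \emph{explicit} constant $21$ to work cleanly, i.e. controlling $z\sum_{p\mid d}\tfrac1p + \omega(d)$ below the number of integers in $[z,2z]$ simultaneously for all $z\ge z_0$ and all admissible $d$. The subtlety is that $\sum_{p\mid d} 1/p$ can be as large as roughly $\log\log d \approx \log\log(m!M^m)$, which is not bounded by an absolute constant; so one cannot simply say ``density of bad integers $<1/2$''. The resolution must couple the two terms — when $d$ has many prime factors, $z_0$ is also forced to be large (through $\log_2 d\le z_0$), giving slack in the interval length — so the inequality is really being verified in the regime $z\ge \log_2 d$, where a short computation (possibly separating the primes $\le z$ from those $>z$, the latter contributing nothing to $[z,2z]$ beyond at most one multiple each) closes the gap. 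I would expect the author to either do this estimate by hand with the stated $z_0$, or to cite/recall a standard bound on $\prod_{p\mid d}(1-1/p)$ or on the number of integers up to $x$ composed only of primes dividing $d$.
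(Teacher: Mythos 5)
Your steps (i)--(ii) are fine and match the paper: case (a) of Lemma~\ref{Lem:LinearSystems} applies, $1\in\calT$, $d\le m!M^m$, and it suffices to find $n\in[z,2z]$ with $(n,d)=1$. But the heart of your argument, step (v), is a genuine gap, and you essentially admit it: the union bound you propose, namely that the number of integers in $[z,2z]$ sharing a factor with $d$ is at most $z\sum_{p\mid d}\tfrac1p+\omega(d)$, cannot be pushed below the interval length in general. Already if $2\cdot3\cdot5\cdot7\mid d$ one has $\sum_{p\mid d}1/p>1$, so the main term alone exceeds $z$; and since $\sum_{p\mid d}1/p$ can grow like $\log\log d$, no absolute-constant density argument works. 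Rescuing the idea would require a genuine inclusion--exclusion/sieve estimate (with the $2^{\omega(d)}$-type error controlled) coupled to the bound $\log_2 d\le z_0\le z$, which you gesture at but do not carry out; moreover you suggest ``strengthening $z_0$ by a constant factor'', which the statement does not permit. So as written the proof does not close, and the constant $21$ is not recovered.

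The paper avoids counting altogether by looking only at \emph{primes} in the interval: if the system were unsolvable for every $n\in[z,2z]$, then in particular every prime $p\in[z,2z]$ satisfies $(p,d)\ne1$, i.e.\ $p\mid d$; hence the product of all primes in $[z,2z]$ divides $d$. An explicit Chebyshev-type estimate says this product is at least $2^{z}$ once $z\ge 21$ (this is where the constant $21$ comes from -- it is not the outcome of optimizing a sieve), whereas $d\le m!M^m\le(mM)^m=2^{z_0}\le 2^{z}$, a contradiction. If you want to keep your general strategy of finding \emph{some} integer coprime to $d$, the cleanest fix is exactly this restriction to primes: a prime in $[z,2z]$ is either coprime to $d$ or a divisor of $d$, and the size bound on $d$ limits how many primes of size $\ge z$ it can absorb.
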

\begin{proof}
If the system has infinitely many solutions, then there exists an
integer $d\leq m! M^m$ such that the system is solvable in $\Z_n$
whenever $(n, d)=1$. If the system is unsolvable for all $n\in[z,
2z]$, then in particular, $d$ is divisible by all prime
numbers in this interval. Since for $z\geq 21$, the product of all
prime numbers in $[z, 2z]$ is $\geq 2^z$, our claim follows.
\end{proof}
The following result is essentially due to Bovey, Erd\H os and
Niven \cite{BEN}.
\begin{Lem}
\label{Lem:CD}
Let $A\subseteq\Z_n$ be a zero-sum free multi-set containing $N$
elements, where $N\ge 2n/3$. Then there exists an element $a$ of $\Z_n$,
which occurs in $A$ with multiplicity greater than $2N-n$. Moreover,
$a$ is a generator of $\Z_n$. 
\end{Lem}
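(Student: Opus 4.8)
The plan is to reduce the statement to a counting argument about the distinct partial sums of the multi-set $A$, following the approach of Bovey, Erd\H os and Niven. First I would fix a zero-sum free multi-set $A \subseteq \Z_n$ of size $N \ge 2n/3$. Suppose for contradiction that \emph{every} element of $\Z_n$ occurs in $A$ with multiplicity at most $2N-n$. The main engine is the observation that if $B \subseteq \Z_n$ is zero-sum free and $a$ occurs in $B$ with multiplicity $r$, then the set of partial sums of $B$ has size at least $\min(r+1, \cdot)$ along the ``$a$-direction'' and, more importantly, one can combine several such arithmetic-progression chunks. Concretely, order the distinct values occurring in $A$ as $a_1, \dots, a_s$ with multiplicities $r_1, \dots, r_s$, so $\sum_i r_i = N$. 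The Cauchy--Davenport / Chowla-type lower bound gives that the set $\Sigma(A)$ of all subset sums of $A$ has cardinality at least $1 + \sum_i r_i = 1 + N$ unless the $a_i$ all lie in a common small subgroup or the sumsets ``wrap around'' --- and wrapping around is exactly what produces $0 \in \Sigma(A)$, contradicting zero-sum freeness. So the heart of the matter is to show that if no element has multiplicity $> 2N-n$, then the partial sums are forced to cover all of $\Z_n$, hence hit $0$.

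The key steps, in order, are: (1) reduce to the case where $A$ generates $\Z_n$ (if $A$ generates a proper subgroup $H$, apply the bound inside $H$, where $N \ge 2n/3 > 2|H|/3$ fails only if $|H|$ is small, and handle that separately or note it forces an even larger relative multiplicity); (2) pick the element $a$ of maximal multiplicity $r$ and consider the quotient $\Z_n / \langle a \rangle$ --- but since we will conclude $a$ generates, better to argue directly: the partial sums using only copies of $a$ form a progression $0, a, 2a, \dots, ra$; (3) show that adding the remaining $N - r$ elements one at a time, each new partial-sum set strictly grows until it exhausts $\Z_n$, because a sumset $X + \{0, b\}$ has $|X \cup (X+b)| \ge |X| + 1$ whenever $X \ne \Z_n$ and $b$ generates modulo the stabilizer; (4) count: starting from size $r+1$ and gaining at least $1$ for each of the remaining $N-r$ elements, we reach size $\ge N+1$, and if $N + 1 > n$ we are done immediately, while if $r+1 + (N-r) = N+1 \le n$ we instead use the sharper Kneser-type bound to force $0$ into the partial sums once the multiplicities are spread out, i.e.\ once $r \le 2N - n$; (5) the inequality $r > 2N-n$ then follows, and finally (6) observe that if $\langle a \rangle = H \subsetneq \Z_n$ then $r \le |H| - 1 \le n/2 - 1 < 2N - n$ (using $N \ge 2n/3$), contradicting step (5), so $a$ generates $\Z_n$.

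The main obstacle I expect is step (3)--(4): making the ``each element grows the partial-sum set by at least one, until it fills up'' argument fully rigorous, because the increment can stall when the current partial-sum set is a union of cosets of some subgroup $H'$ stabilizing it and the new element $b$ lies in $H'$. One must track this stabilizer carefully (this is precisely where Kneser's theorem enters) and show that the \emph{total} deficit incurred by all such ``bad'' elements is controlled by the size of the largest multiplicity class --- which is exactly the quantity $2N-n$ we are trying to bound. In other words, the bookkeeping has to be arranged so that the only way the partial sums fail to reach all of $\Z_n$ is if some single element is repeated more than $2N - n$ times; since $A$ is zero-sum free, the partial sums cannot be all of $\Z_n$, so that repetition must occur. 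This is the delicate combinatorial core, and I would lean on the Bovey--Erd\H os--Niven paper \cite{BEN} for the precise form of the counting, adapting it to extract the ``generator'' conclusion as well.
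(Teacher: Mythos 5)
There is a genuine gap in the generator part of your argument. Your step (6) claims that if $a$ generates a proper subgroup $H$ then $r \le |H|-1 \le n/2-1 < 2N-n$, ``using $N \ge 2n/3$''. But $N \ge 2n/3$ only gives $2N-n \ge n/3$, and $n/2-1 < n/3$ is false for every $n \ge 6$; the chain would require $N \ge 3n/4 - 1/2$, which is not assumed. Concretely, with $n=12$ and $N=8$ one has $2N-n=4$, while an element of order $6$ may a priori occur up to $5$ times, so the naive bound ``multiplicity $\le \ord(a)-1$'' does not contradict $r > 2N-n$. To rule out a proper subgroup one must bring the other $N-r$ elements into play. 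The paper does this via the quotient: writing $m$ for the multiplicity of $a$ and $H = \langle a\rangle$, among any $(\Z_n:H)$ elements of $\Z_n/H$ there is a zero-sum, so from the $N-m$ elements of $A\setminus\{a^m\}$ one extracts $\lfloor\frac{N-m}{(\Z_n:H)}\rfloor$ disjoint blocks each summing into $H$; these sums together with the $m$ copies of $a$ form a zero-sum free multiset inside $H$, whence $m+\lfloor\frac{N-m}{(\Z_n:H)}\rfloor\le |H|-1$, i.e.\ $(\Z_n:H)m+N-m<n$. Combined with $m\ge 2N-n+1$ and $(\Z_n:H)\ge 2$ this gives $3N+1<2n$, contradicting $N\ge 2n/3$. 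Some argument of this kind, exploiting the remaining elements and not just the order of $a$, is unavoidable; as written your step (6) fails.

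For the multiplicity statement itself the paper simply quotes \cite{BEN}, so your Kneser/partial-sum sketch is an attempt to reprove a result one is allowed to cite; as it stands it is not a proof (you yourself defer the stabilizer bookkeeping, which is the entire difficulty, to \cite{BEN}), but that is not where the lemma requires new work. The missing idea is the quotient argument above for why the high-multiplicity element must generate $\Z_n$.
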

\begin{proof}
The statement on the multiplicity is \cite{BEN}. Now suppose that $a$
is not a generator of $\Z_n$, and let $H$ be the subgroup generated
by $a$. Denote by $m$ the multiplicity of $a$. Among $(\Z_n:H)$
elements of $\Z_n/H$ we can choose a zero-sum, that is, among the $N-m$
elements of $A \setminus \{a^m\}$ we can choose a system of
$\lfloor\frac{N-m}{(\Z_n:H)}\rfloor$ disjoint sets, each one adding up
to an element in $H$. Since $A$ is zero-sum free, we cannot obtain
$|H|$ elements in this way, that is,
$m+\lfloor\frac{N-m}{(\Z_n:H)}\rfloor\leq |H|-1$, which implies
$(\Z_n:H)m+N-m<n$. Since $m\geq 2N-n+1$, and $(\Z_n:H)\geq 2$, we
obtain $3N+1<2n$, contradicting $N \ge 2n/3$.
\end{proof}
\begin{Cor}
\label{Cor:CD}
Let $A\subseteq\Z_n$ be a subset with $|A|\geq 3n/4$. Then $A$ is
zero-sum free if and only if $0 \notin A$ and
there exists some invertible $\alpha\in \Z_n^{\times}$,
such that $\sum_{a\in A} \iota(\alpha\cdot a)\leq n-1$, where
$\iota\colon \Z_n \to \N$ is the map sending $x$ to
the least non-negative residue contained in the class $x$.
\end{Cor}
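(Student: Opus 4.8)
The plan is to deduce the corollary from Lemma~\ref{Lem:CD}. First consider the ``if'' direction, which is the easy one: suppose $0 \notin A$ and there is some $\alpha \in \Z_n^\times$ with $\sum_{a \in A} \iota(\alpha a) \le n-1$. Multiplication by $\alpha$ is an automorphism of $\Z_n$, so $A$ is zero-sum free if and only if $\alpha A$ is. Now if $\alpha A$ contained a zero-sum, say a non-empty sub-multi-set $S$ with $\sum_{a \in S}\alpha a = 0$ in $\Z_n$, then $\sum_{a \in S}\iota(\alpha a)$ would be a positive multiple of $n$; but $\sum_{a \in S}\iota(\alpha a) \le \sum_{a \in \alpha A}\iota(a) \le n-1$, and the sum is strictly positive because $0 \notin \alpha A$ (as $0 \notin A$), a contradiction. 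Hence $\alpha A$, and therefore $A$, is zero-sum free.

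For the ``only if'' direction, assume $A$ is zero-sum free with $|A| = N \ge 3n/4$. Then certainly $0 \notin A$. Since $3n/4 \ge 2n/3$, Lemma~\ref{Lem:CD} applies: there is an element $a \in \Z_n$ occurring in $A$ with multiplicity $m > 2N - n$, and $a$ is a generator, i.e.\ $a \in \Z_n^\times$. Set $\alpha := a^{-1}$, so that $\alpha$ maps the $m$ copies of $a$ to $m$ copies of $1$, and $\iota(1) = 1$ for each. It remains to bound $\sum_{b \in \alpha A}\iota(b)$. Split $\alpha A$ into the $m$ copies of $1$ and the remaining $N - m$ elements. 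The $m$ copies contribute exactly $m$. For the remaining elements I will use that $\alpha A$ is still zero-sum free, so in particular no sub-multi-set of the $N-m$ leftover elements can reach a total congruent to $0$; more usefully, I will bound $\sum \iota$ over \emph{those} $N-m$ elements by reasoning that if this sum were $\ge n$, one could peel off a sub-multi-set whose $\iota$-sum lies in $[1, n-1]$ and then combine with copies of $1$ to hit exactly $n$, producing a zero-sum. Concretely: the partial sums $0, \iota(b_1), \iota(b_1)+\iota(b_2), \dots$ of the leftover elements, reduced mod $n$, together with the availability of $m$ ones, let us adjust any partial sum upward by $1$'s; so if the total $\iota$-sum of leftovers is $\ge n$ we can select a sub-collection with $\iota$-sum exactly in the range where adding enough $1$'s (at most $m$ of them) yields exactly $n$. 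This forces a zero-sum unless $\sum_{\text{leftover}}\iota(b) \le n - 1 - (\text{something})$; carrying this through gives $\sum_{b \in \alpha A}\iota(b) \le n-1$ as required.

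The main obstacle is making the counting argument in the ``only if'' direction fully rigorous — i.e.\ verifying that the hypothesis $N \ge 3n/4$ (hence $m > 2N - n \ge n/2$, so we have more than $n/2$ copies of $1$ available) is exactly what is needed so that ``top up with $1$'s to reach a multiple of $n$'' always succeeds whenever the leftover $\iota$-sum is too large. One clean way to organize this: after replacing $A$ by $\alpha A$, suppose for contradiction that $\sum_{b \in A}\iota(b) \ge n$. Order the non-$1$ elements arbitrarily and look at their partial $\iota$-sums together with the running count of available $1$'s; a pigeonhole/greedy step shows one reaches a sub-multi-set of $A$ whose $\iota$-sum is exactly $n$, i.e.\ a zero-sum, contradiction. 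The inequality $m > 2N-n$ guarantees there are always enough $1$'s to close the gap of size $< n$ at each stage. This is the only place where the sharper constant $3n/4$ (versus $2n/3$) is used, so I expect the write-up to spend most of its effort there; the rest is bookkeeping.
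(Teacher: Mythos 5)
Your overall route is the same as the paper's: the ``if'' direction is immediate, and for ``only if'' you invoke Lemma~\ref{Lem:CD}, normalise by $\alpha=a^{-1}$ so that $1$ occurs with multiplicity $m>2N-n>n/2$, and then try to bound the $\iota$-sum of the remaining elements by a greedy partial-sum argument, topping up with copies of $1$ to manufacture a zero-sum. But the one step you yourself flag as the main obstacle is left genuinely open, and the justification you sketch for it does not work: $m$ copies of $1$ can only close a gap of size at most $m$, not ``of size $<n$'', so nothing in your argument prevents the partial sums from jumping clean over the reachable window. Concretely, if some leftover element $x$ had $\iota(x)\ge m+1$, then the first partial sum exceeding $n-m-1$ could land at $n$ or beyond, where at most $m$ ones no longer suffice to reach the next multiple of $n$, and the intended contradiction evaporates.

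The missing idea is a preliminary observation: no leftover element $x$ can have $\iota(x)\ge n/2$, because then $1\le n-\iota(x)\le n/2<m$, so $x$ together with $n-\iota(x)$ copies of $1$ is already a zero-sum in $\alpha A$, a contradiction. With this in hand every jump $\iota(x_i)$ is less than $n/2<m$, so if the leftover $\iota$-sums ever exceed $n-m-1$, the first partial sum $s$ doing so satisfies $s\in[n-m,\,n-1]$ and can be completed to exactly $n$ with $n-s\le m$ ones, again a contradiction; hence $\sum_i\iota(x_i)\le n-m-1$ and the total is $\le n-1$ as required. Note also that the contradiction must be triggered as soon as the leftover sum exceeds $n-m-1$, not $n$ as you write at one point (you half-correct this later with your ``$n-1-(\text{something})$''). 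Once this observation about elements with $\iota(x)\ge n/2$ is added, your argument coincides with the proof in the paper.
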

\begin{proof}
Obviously, if $0 \notin A$ and
$\sum_{a\in A} \iota(\alpha\cdot a)\leq n-1$, then $A$ is
zero-sum free. Hence, we assume that $A$ is zero-sum free and bound
the sum. 
In view of Lemma~\ref{Lem:CD} we may assume without loss that
$A$ contains the element 1 with multiplicity $m > n/2$. If $A$
contains an element in the interval $[n/2, n]$, this element can be
combined with a certain multiple of 1 to get a zero-sum. Let $x_1,
\ldots, x_k$ be the list of all elements in $A$ different from
1. Either $\sum \iota(x_i)\leq n-m-1$, which is consistent with our claim, or
there is a least $\ell$ such that $s=\sum_{i=1}^\ell \iota(x_i)>
n-m-1$. Since no single $x_i$ satisfies $\iota(x_i)> n/2$, we have
$s\in[n-m, n-1]$, hence, $s$ can be combined with a certain multiple
of 1 to get a zero-sum, which is a contradiction.
\end{proof}
\section{Proof of Theorem~\ref{thm:decidable}}

\begin{proof}[Proof of Theorem~\ref{thm:decidable}]
Let $k$ and $\ell$ be fixed once and for all. We want to describe
the set of $n$ co-prime to $k$
such that $D(\Z_k^{\ell}\oplus\Z_{n})>kn + \delta$ holds.
This is equivalent to the existence of a zero-sum free
set $A\subset \Z_k^{\ell}\oplus\Z_{n}$ of cardinality
$kn + \delta$.

First note that such a set $A$ can be described by its projection
$\Abar$ onto $\Z_{k}^{\ell}$ and the multi-function
$f\colon \Abar \to \Z_{n}$ such that $(a, f(a))\in A$ is the preimage of
$a \in \Abar$. Using this description, the existence of a set $A$ as above
is equivalent to the existence of a set
$\Abar \subset \Z_{k}^{\ell}$ of cardinality $kn + \delta$
and a multi-function $f\colon \Abar \to \Z_{n}$ (call $(\Abar, f)$ a
``candidate'') such that the following
condition holds:

(*) For any zero-sum $Z \subset \Abar$, the sum $\sum_{a\in Z}f(a)$
is not equal to zero.

The sum $\sum_{a\in Z}f(a)$ will often simply be called the
``$\Z_{n}$-sum of $Z$''.
Moreover,
we will use the following terminology: A ``constant'' is a value which
only depends on $k$, $\ell$ and $\delta$ (but not on $n$); ``bounded'' means bounded
by a constant (in the sense just described), and ``almost all'' means that
the number of exceptions is bounded.

Here is the main part of the proof. We initially skip the proofs
of the three following steps:

\begin{enumerate}
\item
Suppose $(\Abar, f)$ is a candidate and
$(Z_i)_{i\le m}$ is a system of $m$ disjoint zero-sum subsets of $\Abar$.
From this we can form the multi-set $B:= B((Z_i)_i) := \{\sum_{a\in Z_i}f(a)\mid
1 \le i \le m\} \subset \Z_n$. If $(\Abar, f)$ satisfies (*), then
$B$ has to be zero-sum free.

We will find a constant $\co{defect}$ such that $(\Abar, f)$ satisfies (*) if and only
if for all systems $(Z_i)_{i\le m}$ of $m := n - \co{defect}$ disjoint zero-sum subsets of $\Abar$,
the corresponding set $B((Z_i)_i)$ is zero-sum free.
From now on, we fix $m$ like this.
\item
We will find a constant $\co{card}$ such that 
if (*) holds for the candidate $(\Abar, f)$ and
$(Z_i)_{i}$ is a system of $m$ disjoint zero-sums of $\Abar$,
then at most $\co{card}$ of the sets $Z_i$ do not have cardinality $k$.

\item
We will show that if a candidate $(\Abar', f')$ satisfying (*) exists,
then there does already exist a candidate $(\Abar, f)$ of a particular
form. Candidates of this form will be called
``main candidates'', and they are defined as follows.
We will fix a suitable constant
$\co{var}$. $(\Abar, f)$ is a main candidate
if there exists an element $a_0 \in \Z_k^l$ such that there are at least
$|\Abar| - \co{var}$ occurrences of $a_0$ in $\Abar$ with $f(a_0) = \frac{1}{k}$.
Note that $\frac{1}{k}$ does make sense as $k$ and $n$ are co-prime.
(Right now, we could as well have written $f(a_0) = 1$ instead of
$f(a_0) = \frac{1}{k}$, but later, $\frac{1}{k}$ will be more handy.)
\end{enumerate}

The remainder of the proof goes as follows:

\begin{enumerate}\setcounter{enumi}{3}
\item
A ``datum for a main candidate'' is a tuple
$(a_0, (a_j)_j, (f_j)_j)$, where $a_0\in \Z_k^\ell$,
$(a_j)_j \in (\Z_k^\ell)^{\co{var}}$,
and $(f_j)_j \in (\Z_n^\ell)^{\co{var}}$.
Such a datum yields a main candidate $(\Abar, f)$ in the following way:
$\Abar = \Abar_0 \cup \Abar_{\star}$,
where $\Abar_0 := \{a_0^{kn + \delta - \co{var}}\}$ and
$\Abar_{\star} := \{a_j \mid 1 \le j \le \co{var}\}$,
$f(a_0)=\frac{1}{k}$
for each $a_0 \in \Abar_0$, and $f(a_j)  = f_j$ for
$a_j \in \Abar_{\star}$.
Each main candidate can be described by such a datum.
\end{enumerate}

Only the $(f_{j})_{j}$ part of such a datum depends on $n$.
Our goal now is to verify that after fixing
$a_0$ and $(a_j)_j$, whether (*) holds for the corresponding candidate
depends on $(f_{j})_{j}$ in a simple way:
we will construct systems of linear equations over $\Z$ such
that (*) holds if and only if the tuple $(f_{j})_{j}$ is a solution
of one of these systems modulo $n$.
Then the theorem will follow using Lemma~\ref{Lem:LinearSystems}.

\begin{enumerate}\setcounter{enumi}{4}
\item
Fix a datum $(a_0, (a_j)_j, (f_j)_j)$ and the corresponding
main candidate $(\Abar, f)$ as in step (4), and
suppose that $(Z_i)_i$ is a system of
$m$ disjoint zero-sum subsets of $\Abar$.
At least $m - \co{card} - \co{var} =: m - \co{eq}$ of the sets $Z_i$
are subsets of $\Abar_0$ of the form $\{a_0^k\}$.
The other $\co{eq}$ sets form a system of disjoint
zero-sums of $\Abar \setminus \{a_{0}^{k(m-\co{eq})}\}
= \Abar_{\star} \cup \{a_{0}^{k\co{defect}+\delta+(k-1)\co{eq}}\} =:
\Abar_{\star\star}$.
On the other hand, any system $(Z_{i})_{i \le \co{eq}}$ of $\co{eq}$
disjoint zero-sums of $\Abar_{\star\star}$ yields a system of $m$
disjoint zero-sums of
$\Abar$ by adding sets $Z_{i} = \{a_0^k\}$ for $\co{eq} < i \le m$.
From now on, we suppose that $(Z_i)_{i \le m}$ was obtained in this way.
\item
The set $B := B((Z_i)_{i\le m}) \subset \Z_n$ corresponding to such a system is of the form
$\{b_1, \dots, b_{\co{eq}}, 1^{m-\co{eq}}\}$,
where $b_i = \sum_{a\in Z_i}f(a) = \sum_{\{j\mid a_j\in Z_i\}}f_j
+ \frac{1}{k}z_{i}$
and $z_{i} = |Z_{i} \cap (\Abar_{\star\star} \setminus \Abar_{\star})|$.
\item
Suppose $m \ge \frac{3}{4}n$, i.e.\ $n \ge 4\co{defect}$.
Then we can apply Corollary~\ref{Cor:CD} to the set $B$ and get that it
is zero-sum free if and only if $b_i \ne 0$ for all $i \le \co{eq}$ and
there exists some $\alpha\in \Z_n^{\times}$ such that
$\sum_{b\in B} \iota(\alpha\cdot b)\leq n-1$
(with $\iota\colon \Z_n \to \N$ defined as in Corollary~\ref{Cor:CD}).
Supposing $m - \co{eq} \ge n/2$, we get that
only $\alpha = 1$ is possible, and the condition becomes
$\sum_{i=1}^{\co{eq}}\iota(b_i) < n - (m-\co{eq}) = \co{defect} + \co{eq}$.
\item
This can be reformulated as follows:
Set $C_{0} :=
\{(c_{i})_{i\le\co{eq}} \in \Z^{\co{eq}} \mid c_{i} \ge 1 \text{ and }
\sum_{i=1}^{\co{eq}}c_{i} < \co{defect} + \co{eq}\}$
(note that $C_{0}$ does not depend on $n$),
and denote by
$\pi\colon \Z^{\co{eq}} \surject \Z_n^{\co{eq}}$ the projection.
Then $B$ is zero-sum free if and only if $(b_{i})_{i} = \pi((c_{i})_{i})$
for some $(c_{i})_{i} \in C_{0}$.
Moreover, we rewrite $b_{i} = \pi(c_{i})$ as
$\sum_{\{j\mid a_j\in Z_i\}}kf_j = \pi(kc_{i} - z_{i})$.
\item
Putting all this together, we have:
For sufficiently large $n$,
there exists a pair $(\Abar, f)$ satisfying $(*)$ if and only if:
\[
\overbrace{%
  \!\!\bigvee_{\substack{a_0 \in \Z_k^\ell\\ (a_j)_j \in (\Z_k^\ell)^{\co{var}}}}
  \!\!\!\!\!\!\!\!\!\!\!\exists (f_j)_j \in \Z_n^{\co{var}}\!
  }^{\text{\llap{Ex.} main cand.\ s.\ th.}}
\,\overbrace{%
  \!\!\!\!\!\!\!\!\!\!\!
  \bigwedge_{\substack{(Z_i)_i \text{ system}\\\text{of $\co{eq}$ disjoint}\\
  \text{zero-sums in }\Abar_{\star\star}}}\!\!
  }^{\text{\llap{for all} zero\rlap{-sum systems}}}  
\,\overbrace{%
  \bigvee_{(c_i)_i \in C_0}\,
  \bigwedge_{1 \le i \le \co{eq}}\,
  \sum_{\{j \mid a_j \in Z_i\}} \!\!\!\! k f_j = \pi(kc_{i} - z_{i})
  }^{\text{$B$ is zero-sum free}}
\]
We used big conjunctions and disjunctions $\bigwedge$ and $\bigvee$
as notation for some of the universal and existential quantifiers
to emphasise that their range is finite and independent of $n$.

Putting this formula into disjunctive normal form and moving the
existential quantifier inside the $\bigvee$, we get that 
there exists a pair $(\Abar, f)$ satisfying $(*)$ if and only if
at least one of a finite number of systems of linear equations
(with coefficients in $\Z$ not depending on $n$)
has a solution in $\Z_n$.

By Lemma~\ref{Lem:LinearSystems}, each system either contributes only
finitely many integers $n$ such that $(\Abar, f)$ satisfies $(*)$,
or the contributed set has the form
$\{n \mid (n,d) \in \calT\}$ for some integer $d$ and some set $\calT$
of divisors of $d$ containing $1$. The union of sets of this form again has
this form, so the first part of the theorem is proven.

Concerning the algorithm it is enough to find computable bounds for
the following: a bound $n_0$ such that the above formula holds for all
$n \ge n_0$; a bound $n_1$ such that if the system
of equations is solvable modulo $n$ only for finitely many $n$, then
these $n$ are at most $n_1$; a bound $d_0$ such that
if the system of equations is solvable for infinitely many $n$,
then $d \le d_0$.

Clearly, all bounds which appear in this proof are computable, so
we do get this result. In Section~\ref{subsect:bounds}, we will
even determine such bounds explicitly.
\end{enumerate}

Now let us fill in the three remaining steps.

\begin{enumerate}
\item
Let $\Abar \subset \Z_{k}^{\ell}$ be of cardinality $kn + \delta$,
and suppose $Z\subset \Abar$ is any zero-sum subset. We will construct
a large system $(Z_i)_i$ of disjoint zero-sums in $\Abar$ such that $Z$ can
be written as union of some of these zero-sums $Z_i$. This then implies the
first step: if $B((Z_i)_i)$ is zero-sum free, then in particular the
sum $\sum_{a\in Z}f(a)$ is not zero.

By Lemma~\ref{Lem:Dm} we
can find at least $\lfloor\frac{|Z| - c(k, \ell)}k\rfloor$
disjoint zero-sums in $Z$ and at least
$\lfloor\frac{|\Abar \setminus Z| - c(k, \ell)}k\rfloor$
disjoint zero-sums in $\Abar \setminus Z$. We may suppose that $Z$
is the union of the zero-sums we found inside.
Together, we get 
$\lfloor\frac{|Z| - c(k, \ell)}k\rfloor
+ \lfloor\frac{|\Abar \setminus Z| - c(k, \ell)}k\rfloor
\ge \lfloor\frac{|\Abar| - 2c(k, \ell)}k\rfloor - 1 =: m =: n -\co{defect}$
disjoint zero-sums in $\Abar$. Note that $\co{defect}$ does not depend on $n$.
\item
Now suppose $(\Abar,f)$ is a candidate satisfying (*).
We want to show that in systems of $m$ disjoint zero-sums of $\Abar$,
almost all sets have exactly $k$ elements.

Suppose first that $\Abar$ contains $N$ disjoint zero-sum sets which together
have only $kN-c$ elements (for some value $c$). Then in the remaining
$k(n-N) + \delta + c$ elements of $\Abar$, we can find
(by Lemma~\ref{Lem:Dm})
$\lfloor\frac{k(n-N) + \delta + c - c(k, \ell)}k\rfloor
= n - N + \lfloor\frac{\delta + c - c(k, \ell)}k\rfloor$
disjoint zero-sums. If $c \ge c(k, \ell) - \delta =: \co{less} + 1$
these are $n-N$ disjoint zero-sums, and together with the
other $N$ ones, we get $n$ disjoint zero-sums $Z_i$. But then the set of sums
$B((Z_i)_i) \subset \Z_{n}$ can not be zero-sum free, which is a
contradiction.

In particular, we just showed that there are at most $\co{less}$ disjoint
zero-sum subsets of $\Abar$ with cardinality less than $k$.

Now let $(Z_{i})_{i}$ be a system of $m$ disjoint zero-sum sets. 
To see that almost all of these sets have at most $k$ elements, just
note that there are not so many elements in $\Abar$ left over to make the
sets bigger. More precisely, suppose that $M$ of the sets $Z_{i}$ have
more than $k$ elements, i.e.\ at least $k+1$ elements each. The remaining
$m-M$ sets contain at least $k(m-M) - \co{less}$ elements, so altogether we
get the inequality $M(k+1) + k(m-M) - \co{less} \le |\Abar| =
kn + \delta$. This implies
$M  \le kn + \delta - km + \co{less} =
\delta+k\cdot\co{defect} + \co{less} =: \co{more}$.

Putting both together, we get that no system of $m$ disjoint zero-sums has more
than $\co{card} := \co{more} + \co{less}$ sets of cardinality different from
$k$.
\end{enumerate}

The third step requires some more work, so we decompose it into several substeps.
We suppose that $(\Abar, f)$ is a candidate satisfying (*). In the first
three substeps, we prove some properties of $(\Abar, f)$; in the last substep,
we use this to construct another candidate $(\Abar', f')$ which will be a main
candidate satsifying (*).

\begin{enumerate}
\renewcommand{\theenumi}{3.\arabic{enumi}}
\item
Claim: Suppose that $n$ is sufficiently large. Then
for any system $(Z_i)_i$ of $m$ disjoint zero-sums in $\Abar$,
almost all elements of the sum-set $B := B((Z_i)_i)$
are equal to one single element $b \in \Z_n$ which generates $\Z_{n}$.

This follows from Lemma~\ref{Lem:CD}. We need $|B| = n - \co{defect} \ge
\frac{2}{3}n$, i.e.\ $n \ge 3\co{defect}$. And we get an element $b$ with
multiplicity at least $2|B| - n + 1 = m - \co{defect} + 1=: m - \co{ws}$.
(ws = wrong sum.)

\item
Claim: If $n \gg 0$, then the prevalent value $b$ in $B((Z_i)_i)$
is the same for any system $(Z_i)_i$ of $m$ disjoint zero-sums of $\Abar$.

Suppose $(Z_{i})_{i}$ and $(Z'_{i})_{i}$ are two different systems of disjoint
zero-sums, and denote the prevalent values is $B((Z_{i})_{i})$ and $B((Z'_{i})_{i})$ by $b$ and $b'$
respectively. We choose $\co{ws}+1$ of the sets $Z_{i}$ which all have cardinality
at most $k$ and all have $\Z_{n}$-sum $b$. This is possible if
$m \ge \co{more} + 2\co{ws}+1$.
Without loss, our chosen sets are $Z_{1}, \dots, Z_{\co{ws}+1}$.

Now we do the same for $(Z'_{i})_{i}$, i.e.\ we choose
$Z'_{1}, \dots, Z'_{\co{ws}+1}$ to have at most $k$ elements each and to have 
$\Z_{n}$-sum-values $b'$. But in addition, we want that these sets
$Z'_{j}$ (for $j \le \co{ws}+1$) are disjoint from the sets 
$Z_{i}$ (for $i \le \co{ws}+1$). Each set $Z_{i}$ can intersect at most
$k$ of the sets $Z'_{j}$, so the additional condition forbids at most
$k\cdot (\co{ws}+1)$ of the $m$ sets $Z_{j}$. Therefore we can
find our desired sets if $m \ge \co{more} + 2\co{ws} + 1 + k\cdot (\co{ws}+1)$.

Now we use Lemma~\ref{Lem:Dm} to complete our chosen sets
$(Z_{i})_{i \le\co{ws}+1}$ and $(Z'_{i})_{i \le\co{ws}+1}$
to a system of $m$ disjoint zero-sum sets.
By (3.1), there is a prevalent value $b''$
for this system, which leaves out at most
$\co{ws}$ sets. This implies that both $b$ and $b'$ are equal to $b''$.

Without loss, we will now suppose that the prevalent $\Z_n$-value of
any $m$ disjoint zero-sums is $1$.

\item
Claim: There exists a constant $\co{var}$ such that for at most $\co{var}$
of the elements $a \in \Abar$, we have $f(a) \ne \frac{1}{k}$.
In fact we will choose $\co{var}$ such that even a slightly stronger statement
holds: for each $a \in \Z_k^\ell$,
let $r_a$ be number of copies of $a$ in $\Abar$ with $f(a) = \frac{1}{k}$.
Then $\sum_{a \in \Z_k^\ell}k\cdot\lfloor\frac{r_a}{k}\rfloor \ge |\Abar| -
\co{var}$.

Let us call a subset $Z \subset \Abar$ ``neat'' if it is of the form
$\{a^k\}$ for some $a \in \Z_k^\ell$.

We construct a system $(Z_i)_i$ of $m$ disjoint zero-sums with lots of neat sets in the following way:
for each element $a \in \Z_k^\ell$ which appears with multiplicity $\mu$ in
$\Abar$, we form $\lfloor\frac{\mu}{k}\rfloor$ disjoint sets of the form
$\{a^k\}$. If we get more than $m$ sets in this way, we choose $m$ of them. If
we get less than $m$ sets, then we use Lemma~\ref{Lem:Dm} on the remainder
of $\Abar$ to complete our system $(Z_i)_i$. Denote by $\kappa$ the number of neat sets
in  $(Z_i)_i$.

The minimal value of $\kappa$ is attained if the multiplicity in $\Abar$
of each $a \in \Z_k^\ell$ is congruent $k-1$ modulo $k$. So we get
$\kappa \ge \min\{m, \frac1k(|\Abar| - (k-1)\cdot k^\ell)\} =: m - \co{nn}$
(nn = not neat).

Among all systems of $m$ disjoint zero-sums in $\Abar$ which have $\kappa$
neat sets, now choose a system $(Z_i)_i$ where the number of neat sets $Z_i$ with
sum $\sum_{a\in Z_{i}}f(a)$ equal to $1$ is minimal. At most
$\co{ws}$ sets have not sum $1$ and at most $\co{nn}$ are not neat,
so even in this minimal choice we get at least $m - \co{nn} - \co{ws}$ neat sets
with sum 1. We fix this system $(Z_i)_i$ for the remainder of step (3.3).

Choose $a \in \Z_k^\ell$, and let
$\mathcal{N}_a$ be the union of all neat sets $Z_i$ of the form $\{a^k\}$ with
$\Z_n$-sum $1$.
We claim that if there are at least two such neat sets, then $f$ is constant on
$\mathcal{N}_a$; in particular this implies that the value of
$f$ on $\mathcal{N}_a$ is $\frac{1}{k}$. Suppose $f$ is not constant on
$\mathcal{N}_a$. Then there are two elements $a_1, a_2\in \mathcal{N}_a$ with
$f(a_1) \ne f(a_2)$ which belong to two different neat sets $Z_{i_1}$,
$Z_{i_2}$. Modify the system $(Z_i)_i$ by exchanging $a_1$ and $a_2$. Then 
$Z_{i_1}$ and $Z_{i_2}$ do not have sum 1 anymore, so the new system contradicts
the assumption that the old one had a minimal number of neat sets with sum 1.

Doing the above construction for all $a \in \Z_k^\ell$ yields the claim:
The union $\mathcal{N} := \bigcup_{a \in \Z_k^\ell}\mathcal{N}_a$ contains all
neat sets $Z_{i}$ with $\Z_{n}$-sum $1$, so it has cardinality at least $k(m - \co{nn} -
\co{ws})$. On the other hand, if $f$ is not constant equal to $\frac{1}{k}$
on a set $\mathcal{N}_a$, then $|\mathcal{N}_a| = k$, and this can happen for at most
$k^\ell - 1$ of these sets.
Thus $f$ is equal to $\frac{1}{k}$ on at least
$k(m - \co{nn} - \co{ws}) - k(k^\ell - 1) =: |\Abar| - \co{var}$ elements.
As these elements are contributed in groups of $k$, we also get the
slightly stronger statement mentioned at the beginning of this step.

\item
Claim: There is a main candidate $(\Abar', f')$ satisfying (*)
(still assuming that $(\Abar, f)$ is an arbitrary candidate satisfying (*)).

Recall that 
$(\Abar', f')$ is a main candidate if there is an element $a_0\in \Z_k^\ell$
such that $\Abar'$ contains at least $|\Abar| - \co{var}$ copies $a$
of $a_0$ which moreover satisfy $f(a) = \frac{1}{k}$.

We construct $(\Abar', f')$ out of $(\Abar, f)$ in the following way.
As before, for $a\in \Z_k^\ell$ let $r_a$ be number of copies
$a' \in \Abar$ of $a$ with $f(a') = \frac{1}{k}$.
Choose $a_0\in \Z_k^\ell$ such that $r_{a_0}$ is maximal;
in particular $r_{a_0} \ge \frac{|\Abar| - \co{var}}{k^\ell}$. Let 
$(\Abar', f')$ be equal to $(\Abar, f)$ with the following modification:
For each $a \in \Z_k^\ell$, replace $k\cdot\lfloor\frac{r_a}{k}\rfloor$
copies $a' \in \Abar$ of $a$ satisfying $f(a') = \frac{1}{k}$ by
the same number of
copies $a''$ of $a_0$, and set $f'(a'') = \frac{1}{k}$ on these copies.
Denote by $\phi$ the bijection from $\Abar$ to $\Abar'$
which describes these replacements.

Step (3.3) ensures that $(\Abar',f')$ is a main candidate;
it remains to show that it satisfies (*).
To this end, for any zero-sum $Z' \subset \Abar'$, we construct
a zero-sum $Z \subset \Abar$ which has the same $\Z_{n}$-sum as $Z'$.
As $(\Abar, f)$ satisfies (*), this $\Z_{n}$-sum is not equal to zero, so
$(\Abar', f')$ satisfies (*), too.

So suppose a zero-sum $Z' \subset \Abar'$ is given.
Consider the set $\mathcal{M} \subset \Abar'$
of copies $a'$ of $a_{0}$ with $f'(a') = \frac{1}{k}$, and for $a \in
\Z_{k}^{\ell}$ define the subset
$\mathcal{M}_{a} := \{a' \in \mathcal{M} \mid \phi^{-1}(a') \text{ is a copy of
}a\}$. As $|\mathcal{M}_{a}|$ is a multiple of $k$ for any $a \ne a_{0}$,
and assuming $|\mathcal{M}_{a_{0}}| = r_{a_{0}} \ge k-1$, in $Z'$ we may
replace elements of $\mathcal{M}$ by other elements of $\mathcal{M}$
such that $|\mathcal{M}_{a} \cap Z'|$ is a multiple of $k$ for any
$a \ne a_{0}$. (This changes neither the sum nor the $\Z_{n}$-sum of $Z'$.)
Now take $Z := \phi^{-1}(Z')$. As elements are moved by groups of $k$,
$Z$ has the same sum as $Z'$ (i.e. zero),
and as $f' \circ \phi = f$, it has the same $\Z_{n}$-sum.
\qedhere
\end{enumerate}

\end{proof}

\subsection{Computation of the bounds}
\label{subsect:bounds}

The proof of Theorem~\ref{thm:decidable} actually gives a little more
than just decidability. In fact, for each $k$, $\ell$ and $\delta$,
there is a computable constant $n_0$, such that
$D(\Z_k^{\ell}\oplus\Z_{n})\le \delta + kn$ holds true for
all integers $n$ co-prime to $k$ if and only if it holds true for all
integers $n\leq n_0$ which are co-prime to $k$. In this subsection we
compute an upper bound for $n_0$ (Proposition~\ref{prop:bound}). Unfortunately, $D(G)$ is computable
only for very small groups $G$, while the value for $n_0$ obtained in
this subsection is rather large. However, we still believe that the
algorithm given above can be performed for several small values of $k$
and $\ell$, in particular if one does some manual improvements using
the explicit knowledge of $k$ and $\ell$.

We now compute all bounds appearing in the proof of Theorem~\ref{thm:decidable}.

A bound for Lemma~\ref{Lem:Dm}: Denote by $D^k(\Z_k^\ell)$ the least integer $n$
such that every multi-set consisting of $n$ elements in $\Z_k^\ell$
contains a zero-sum of length $\leq k$. Then $c(k, \ell)\leq
D^k(\Z_k^\ell) - k$, since every multi-set containing $k(m-1)+D^k(\Z_k^\ell)$
elements contains a system of $m$ disjoint zero-sums each of length
$\leq k$. For $D^k(\Z_k^\ell)$ we have the trivial bound $k^{\ell+1}$,
but also the estimate $D^k(\Z_k^\ell)\leq (256\ell\log\ell)^\ell\cdot
k$ due to Alon and Dubiner \cite{AD}. For specific values of $k$ and
$\ell$, great improvements on both bounds are possible; it is probably
at this point that our estimates can be improved most easily. To avoid
some awkward expressions in the sequel, we shall express all constants
occurring in the proof of Theorem~\ref{thm:decidable} explicitly in
terms of $k$, $\ell$, $\delta$ and $c(k, \ell)$, and give an explicit estimate
using only the bound $c(k, \ell)\leq k^{\ell+1}$. (For the explicit
estimates, we use that we may suppose $k \ge 2$, $\ell \ge 3$, $\delta \ge 2$.)

Step (1):  $\co{defect} = 1 + \lceil \frac{2c(k, \ell) -
\delta}{k}\rceil \leq 3k^{\ell}$

Step (2): $\co{less} = c(k,\ell) - \delta - 1\leq k^{\ell+1} - \delta$

Step (2): $\co{more} = \delta+k\cdot\co{defect} +
\co{less}\leq 4k^{\ell+1}$

Step (2): $\co{card} = \co{more} + \co{less}\leq 5k^{\ell+1}$

Step (3.1): $\co{ws} = \co{defect} - 1\leq 3k^{\ell}$

Step (3.1) needs $n \ge 3\co{defect}$.
So $n\geq 9k^{\ell}$ suffices.

Step (3.2) needs $n \ge \co{defect} + \co{more} + 2\co{ws} + 1 +
k\cdot (\co{ws}+1)$. So $n\geq 12k^{\ell+1}$ suffices.

Step (3.3): $\co{nn} = \max\{0, (k-1)\cdot
k^{\ell-1}-\frac1k\delta -\co{defect}\}$. The proof of 
Theorem~\ref{thm:decidable} allows us to assume
$\co{defect} = 3k^{\ell}$, which yields $\co{nn} = 0$.
(However, using more careful estimates for $c(k, \ell)$ could yield
non-zero values for $\co{nn}$.)

Step (3.3): $\co{var} = \delta + k(\co{defect} + \co{nn} +
\co{ws} + k^\ell - 1) \leq 7k^{\ell+1} + \delta$

Step (3.4) needs $\frac{kn+\delta - \co{var}}{k^{\ell}} \ge k - 1$.
So $n \ge 8k^{\ell}$ suffices.

Step (5): $\co{eq} = \co{card} + \co{var} \leq 12k^{\ell+1} + \delta$

Step (7) needs $n \ge 4\co{defect}$. So $n\geq 12k^{\ell}$ suffices.

Step (7) also needs $n \ge 2(\co{defect} + \co{eq})$.
Here $n\geq 27k^{\ell+1}+2\delta$ is suffices. This is the largest
bound on $n$ of the proof.

\medskip

Concerning the systems of equations, we get:

Step (8): The coefficients of the equations are all equal to $k$.

Step (8): The absolute values of the right hand sides of the equations are bounded by
$\max(k(\co{defect} + \co{eq}), |\Abar_{\star\star} \setminus \Abar_{\star}|)
= k(\co{defect}+\co{eq}) \le 14k^{\ell+2} + k\delta$.

Step (9): The number of variables in each system of equations is $\co{var} \leq 7k^{\ell+1} + \delta$.

Step (9): The left hand side of any equation is of the form
$\sum_{j}kf_j$, where the sum runs over a subset of $\{1, \dots, \co{var}\}$;
thus we may suppose that each system of equation consists of at most
$2^{\co{var}}\leq 2^{7k^{\ell+1} + \delta}$ equations.

Hence, we can apply Lemma~\ref{Lem:LinearSystems} and
Corollary~\ref{Cor:LinearSystem} to obtain the following.

\begin{prop}\label{prop:bound}
There exists a constant $c$ such that the following holds true. Let
$k, \ell, \delta$ be integers, such that there
exists some $n$, co-prime to $k$, satisfying $D(\Z_k^{\ell}\oplus \Z_{n})>
\delta+kn$. Denote by $\calN$ the set of these $n$, and let $n_1$ be minimum of $\calN$.
Then we have
$n_1\leq  2^{2^{c(k^{\ell+1}+\delta)}}$. Moreover, if $\calN$ is infinite, then
we have $n_1\leq 6\ell(7k^{\ell+1}+\delta)\log k\delta$.
\end{prop}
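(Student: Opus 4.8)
The proof is essentially a matter of collecting the estimates assembled above and feeding them into Lemma~\ref{Lem:LinearSystems} and Corollary~\ref{Cor:LinearSystem}. First I would recall, from the proof of Theorem~\ref{thm:decidable}, the dichotomy it establishes: there is a threshold $n_0$, bounded explicitly by the largest condition on $n$ occurring in that proof, namely $n_0 \le 27k^{\ell+1}+2\delta$, such that for every integer $n \ge n_0$ co-prime to $k$ one has $n \in \calN$ if and only if at least one member of a finite family of linear systems over $\Z$ is solvable modulo $n$. From the bounds just computed, each such system has all coefficients equal to $k$, at most $\co{var} \le 7k^{\ell+1}+\delta$ unknowns, at most $2^{\co{var}}$ equations, and right-hand sides of modulus at most $k(\co{defect}+\co{eq}) \le 14k^{\ell+2}+k\delta$.

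Next I would dispose of the trivial case: if $n_1 \le n_0$ then both asserted inequalities hold at once, since $27k^{\ell+1}+2\delta$ lies far below $2^{2^{c(k^{\ell+1}+\delta)}}$ and, using $k\ge 2$, $\ell\ge 3$, $\delta\ge 2$, also below $6\ell(7k^{\ell+1}+\delta)\log k\delta$. So assume $n_1 > n_0$. Then $n_1$ witnesses the solvability modulo $n_1$ of one of the systems above; apply Lemma~\ref{Lem:LinearSystems} to that system, viewed as a matrix with the dimension and coefficient bounds listed ($M=k$, $N \le 14k^{\ell+2}+k\delta$, at most $2^{\co{var}}$ rows and $\co{var}$ columns). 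Either (case (b)) its solvable moduli form a finite set with each element at most $N\cdot 2^{p(E\cdot V\cdot\log k)}$, where $E\le 2^{\co{var}}$, $V=\co{var}$, and $p$ is the fixed polynomial of that lemma; or (case (a)) they form a set $\{n\mid (n,d)\in\calT\}$ with $1\in\calT$, which, intersected with the integers co-prime to $k$, is infinite. Since $E\cdot V\cdot\log k \le 2^{O(k^{\ell+1}+\delta)}$ and $p$ is polynomial, $2^{p(E\cdot V\cdot\log k)} \le 2^{2^{c(k^{\ell+1}+\delta)}}$ for a suitable absolute $c$, so case (b) gives $n_1 \le 2^{2^{c(k^{\ell+1}+\delta)}}$; and if $\calN$ is finite, case (a) cannot occur for any system (it would make $\calN$ infinite), so this proves the double-exponential bound in general.

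It remains to treat an infinite $\calN$. Then some system $S$ is solvable for infinitely many $n$, hence by Lemma~\ref{Lem:LinearSystems}(a) for all $n$ with $(n,d)=1$, where $d \le \co{var}!\,k^{\co{var}}$. As in the proof of Corollary~\ref{Cor:LinearSystem}, whenever $z$ exceeds an explicit threshold $z_0'$ built from $\tfrac{\co{var}\log(\co{var}\,k)}{\log 2}$ and enlarged so that $2^z$ dominates $dk$, the product of the primes in $[z,2z]$ exceeds $dk$, so some $n\in[z,2z]$ is co-prime to $dk$, in particular to $k$; if in addition $z\ge n_0$, this $n$ lies in $\calN$. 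Taking $z := \max(n_0,z_0')$ gives $n_1 \le 2\max(n_0,z_0')$. Substituting $n_0\le 27k^{\ell+1}+2\delta$, $\co{var}\le 7k^{\ell+1}+\delta$, $M=k$ and simplifying the logarithm — here the normalisations $\ell\ge3$, $\delta\ge2$ are used to absorb additive constants and the gap between $\ell+2$ and the target coefficient — yields $n_1 \le 6\ell(7k^{\ell+1}+\delta)\log k\delta$.

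The only genuine work lies in this last step: one must check that the various ``$+\,\mathrm{const}$'', ``$\log(k^{\ell+2})$ versus $\ell\log k$'' and co-primality corrections all fit under the clean constant $6$, and that the polynomial $p$ of Lemma~\ref{Lem:LinearSystems}, evaluated at a quantity of size $2^{O(k^{\ell+1}+\delta)}$, still only produces a doubly exponential bound. There is no conceptual obstacle; all the substance was spent in Theorem~\ref{thm:decidable} and Lemma~\ref{Lem:LinearSystems}.
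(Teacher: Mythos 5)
Your proposal is correct and follows essentially the same route as the paper's own proof: it plugs the explicit constants from the bounds computation ($\co{var}\le 7k^{\ell+1}+\delta$, coefficient $k$, at most $2^{\co{var}}$ equations, right-hand sides $\le 14k^{\ell+2}+k\delta$, threshold $27k^{\ell+1}+2\delta$) into Lemma~\ref{Lem:LinearSystems} for the finite case and into Corollary~\ref{Cor:LinearSystem} for the infinite case, exactly as done in Section~\ref{subsect:bounds}. Your extra care about picking an $n\in[z,2z]$ co-prime to $k$ (via a prime not dividing $dk$) is a point the paper leaves implicit, and is a harmless refinement rather than a different argument.
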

\begin{proof}
Using the estimates above and Lemma~\ref{Lem:LinearSystems},
in the case that $\mathcal{N}$ is finite, we obtain the bound
\begin{eqnarray*}
n_1 & \leq &
(14k^{\ell+2} + k\delta)2^{p\big(2^{7k^{\ell+1} + \delta} \cdot
  (7k^{\ell+1} + \delta) \cdot \log k \big)}\\
 & \leq & 2^{2^{c(k^{\ell+1}+\delta)}},
\end{eqnarray*}
and our claim follows in this case. If $\mathcal{N}$ is infinite, we
additionally use Corollary~\ref{Cor:LinearSystem} to find that the systems
of linear equations are solvable for an
$n \in [z, 2z]$, provided that $z\ge \max(z_0, 21)$, where
\begin{eqnarray*}
z_0 & \leq & \textstyle{\frac{1}{\log2}}\co{var}\log(\co{var}k)\\
 & \leq & \textstyle{\frac{1}{\log2}}(7k^{\ell+1}+\delta)\log(7k^{\ell+2}+\delta k)\\
 & \leq & 3\ell(7k^{\ell+1}+\delta)\log k\delta,
\end{eqnarray*}
where we used the fact that we may suppose
$\ell\geq 3$, $\delta \ge 2$. Hence, $n_1\leq 2z_0$. To be sure to get an element
of $\calN$ in $[z, 2z]$, we moreover need $z \geq 27k^{\ell+1}+2\delta$, which
is less than the bound just computed. Thus there exists some $n \in \calN$ which
is at most two times our bound; this was our claim.
\end{proof}

Note that the smallest case of interest would be $k=4, \ell=3, \delta = 6$, that
is, checking $D(\Z_4^2\oplus\Z_{4n})=4n+6$ for all odd $n$ up to 3375 would
imply that this equation has only finitely many
counter-examples. Unfortunately, even the case $n=3$ has not yet been decided,
although it is within reach of modern computers.

\section{Proof of Theorem~\ref{thm:Davenport}}
In this section we prove that $B(n)$ implies $D(\Z_3\oplus\Z_{3n}^2)=6n+1$
if $n$ is co-prime to $6$.
We suggest that before reading the following lemmas,
the reader goes directly to the main proof and starts reading it to get
the main idea.

\subsection{Lemmas needed in the proof}

\begin{Lem}
\label{Lem:Length3}
Among 17 arbitrary elements in $\Z_3^3$ there is a zero-sum of length at most 3,
and among 9 \emph{distinct} elements there is a zero-sum of length at most 3.
Moreover, up to linear equivalence, there is precisely one set of 8
distinct elements without zero-sums of length at most 3,
which is given as $\{x, y, z, x+y,
x+y+z, x+2y+z, 2x+z, y+2z\}$.
\end{Lem}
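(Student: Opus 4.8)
The plan is to translate the statement into the language of caps in the affine space $AG(3,3)=\Z_3^3$ (a \emph{cap} being a set with no three collinear points), and to read off the assertions from the classical facts about maximal caps there; the only genuinely long part will be a finite case analysis that the authors in fact carry out by computer. As a first, elementary reduction, the ``$17$ arbitrary elements'' claim follows from the ``$9$ distinct elements'' claim: given a multiset $A$ with $|A|=17$, an element of multiplicity $\ge 3$ yields the zero-sum $\{a^3\}$, the element $0$ (if it occurs) yields the zero-sum $\{0\}$, and otherwise all multiplicities are $\le 2$ and $0\notin A$, so the set of values occurring in $A$ has at least $\lceil 17/2\rceil=9$ elements. (This is also optimal: doubling any $8$-element set without short zero-sums gives $16$ elements with none.) The dictionary I would record is that a set $S$ of \emph{distinct} elements of $\Z_3^3$ has no zero-sum of length $\le 3$ exactly when $0\notin S$, $S$ has no antipodal pair $\{a,-a\}$ (the length-$2$ condition, since $a+b=0$ with $a\ne b$ means $b=2a$), and $S$ contains no affine line (the length-$3$ condition, since three distinct elements of $\Z_3^3$ sum to $0$ iff they form an affine line). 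The crucial point is that under these hypotheses $S\cup\{0\}$ is again a cap: a line inside it would either lie in $S$ or equal $\{0,a,-a\}$, and both are excluded.

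With the dictionary in place the ``$9$ distinct elements'' claim is immediate --- a $9$-element $S$ with no short zero-sum would make $S\cup\{0\}$ a cap of size $10$ in $AG(3,3)$, contradicting the classical fact that maximal caps there have size $9$. For the uniqueness statement, an $8$-element $S$ without short zero-sums gives a $9$-cap $C:=S\cup\{0\}$; conversely one checks directly that the exhibited set $S_0=\{x,y,z,x+y,x+y+z,x+2y+z,2x+z,y+2z\}$ (for a basis $x,y,z$) has no short zero-sum, so $C_0:=S_0\cup\{0\}$ is such a cap. I would then use two further classical facts about $AG(3,3)$: all maximal caps form a single $AGL_3(\Z_3)$-orbit, and the stabiliser of a maximal cap acts transitively --- indeed $2$-transitively --- on its $9$ points; both follow from identifying a maximal cap together with the point at infinity with an ovoid of $PG(3,3)$. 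The first fact gives an affine $\phi$ with $\phi(C)=C_0$; writing $\phi(v)=L(v-p)$ with $p:=\phi^{-1}(0)\in C_0$ and $L\in GL_3(\Z_3)$, transitivity lets me first compose $\phi$ with a cap-automorphism sending $p$ to $0$, after which $\phi$ is linear and carries $S$ onto $S_0$. Linear (rather than merely affine) equivalence is precisely the right notion here, since the linear maps are exactly those fixing $0$ together with the antipodal structure.

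The laborious point --- and the one the authors hand to a computer --- is supplying the classification input of the previous paragraph without invoking ovoid theory: that all $9$-caps of $AG(3,3)$ lie in one $AGL_3(\Z_3)$-orbit and that their stabiliser is point-transitive. By hand one could organise this as follows: an $8$-cap spans $\Z_3^3$ (a cap lying in a plane has at most $4$ points), so after a linear change of coordinates $S$ contains the standard basis, and one branches over the five remaining elements subject to the ``no $0$ / no antipodal pair / no affine line'' constraints, checking at the end that every surviving configuration lies in the $GL_3(\Z_3)$-orbit of $S_0$. Equivalently, and essentially what the authors do, one enumerates all $8$- and $9$-element subsets of $\Z_3^3\setminus\{0\}$ without a short zero-sum and tests them against $S_0$; the search space is small enough to handle mechanically. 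I expect this classification and case-analysis to be the main obstacle, the rest being routine once the cap dictionary is set up.
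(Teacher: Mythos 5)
Your argument is correct, but it takes a genuinely different route from the paper, which in fact contains no argument at all at this point: the authors quote the classification from \cite{Montreal} (Lemma~1(ii) there) and call the first statement folklore, noting that it follows immediately from the rest --- which is precisely your multiplicity reduction ($0\notin A$, no element of multiplicity $\ge 3$, hence at least $\lceil 17/2\rceil=9$ distinct values). The substance you add is the cap dictionary: for distinct elements, ``no zero-sum of length $\le 3$'' means $0\notin S$, no pair $\{a,2a\}$, and no affine line, so that $S\cup\{0\}$ is a cap of $AG(3,3)$; the nine-element claim then follows from the classical bound $9$ for caps of $AG(3,3)$, and the uniqueness claim from the classical facts that the $9$-cap is unique up to affine equivalence and has a point-transitive stabiliser, which lets you correct the affine equivalence to a linear one fixing $0$ --- the one delicate step, and you handle it correctly (together with the routine finite check that the exhibited $8$-element set really has no short zero-sum). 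What your approach buys is a conceptual proof by reference to standard finite geometry (ovoids/elliptic quadrics of $PG(3,3)$) instead of the explicit case analysis of \cite{Montreal}; what it does not buy is self-containedness: the uniqueness of the $9$-cap and the transitivity of its stabiliser --- resting on extendability of $9$-caps of $PG(3,3)$ to ovoids and Barlotti's classification --- are themselves nontrivial quoted results, and your by-hand alternative is only sketched, so the finite classification is relocated rather than eliminated. One factual correction: the computer in this paper is used only for Lemma~\ref{Lem:NoFunc2}; the present lemma is disposed of by citation, not by machine.
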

\begin{proof}
The second part is \cite[Lemma~1 (ii)]{Montreal}, the first part is
folklore (and follows immediately from the second part).
\end{proof}

\begin{Lem}\label{Lem:completions}
Suppose that $n\ge 5$ is an integer having property $B$,
and $B$ is a subset of $\Z_n^2$ with either $2n-3$ or $2n-4$ points.
Then, with one exception, there always exists a group homomorphism
$F\colon \Z_n^2\to \Z_n$ such that:
\begin{enumerate}
\item
In the case $|B| = 2n-3$: For any $c$ with $B \cup \{c\}$ zero-sum free,
we have $F(c) = 1$.
\item
In the case $|B| = 2n-4$: For any $c_1,c_2$ with $B \cup \{c_1,c_2\}$ zero-sum free,
we have $F(c_i) \in \{0,1\}$, and at least one of $F(c_1)$ and $F(c_2)$
is equal to $1$.
\end{enumerate}
The exception is $B = \{b_{1}^{n-2}, b_{2}^{n-2}\}$, where
$b_{1}$ and $b_{2}$ generate $\Z_n^2$.
\end{Lem}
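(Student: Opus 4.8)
\textbf{Proof proposal for Lemma~\ref{Lem:completions}.}

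The plan is to reduce the statement about $B$ to structural information about zero-sum free subsets of $\Z_n^2$ coming from property $B$, and then to track what the property-$B$ element together with the Bovey--Erd\H os--Niven type control (Lemma~\ref{Lem:CD}, Corollary~\ref{Cor:CD}) forces on admissible completions. First I would recall that a zero-sum free set in $\Z_n^2$ of size close to $D(\Z_n^2) = 2n-1$ must, by property $B$, contain some element $e$ with multiplicity $\ge n-2$; I would normalise by a linear automorphism of $\Z_n^2$ so that $e = (1,0)$. Applying this to a \emph{completion} $B \cup \{c\}$ (size $2n-2$) or $B \cup \{c_1,c_2\}$ (size $2n-2$) rather than to $B$ itself is the key move: each completion is zero-sum free of size exactly $2n-2$, hence has a dominant element of multiplicity $\ge n-2$. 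The candidate homomorphism $F$ should be, up to sign, the projection that kills the generic dominant direction, normalised so that $F$ of that direction equals $1$; $F(c)=1$ then says $c$ lies in the "correct" coset.

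Next I would set up the dichotomy that produces the single exception. Either the dominant element $e$ of multiplicity $\ge n-2$ is, up to linear equivalence, the \emph{same} for all completions --- in which case $F$ is well-defined once and for all and the proof goes through --- or two completions have genuinely different dominant directions. In the second case I would argue that $B$ itself already contains $\ge n-2$ copies of \emph{two} independent elements $b_1,b_2$: indeed $|B|=2n-3$ or $2n-4$, and subtracting at most one copy of $c$ (resp. $c_1,c_2$) from each completion still leaves $\ge n-3$ copies in $B$ of each of the two directions, and two independent directions with $\ge n-3$ copies each already account for $2n-6$ of the $\le 2n-3$ points of $B$; a short counting/zero-sum argument (using that $\{b_1^{a}, b_2^{b}\}$ is zero-sum free iff $a,b \le n-1$ when $b_1,b_2$ generate, together with Lemma~\ref{Lem:CD} applied inside cosets) upgrades $n-3$ to $n-2$ and pins down $B = \{b_1^{n-2}, b_2^{n-2}\}$ with $b_1,b_2$ a basis. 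That is precisely the listed exception, and one checks directly that there $F$ cannot exist because both $b_1+\text{(small)}$ and $b_2+\text{(small)}$ directions admit valid completions. Once the exceptional $B$ is excluded, the dominant direction is forced to be unique and $F$ is the associated coordinate projection.

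Finally, with $F$ fixed I would verify the two numerical claims. For $|B|=2n-3$: write $B$ in the normalised coordinates, so $B$ contains $\ge n-2$ copies of $(1,0)$; for $B\cup\{c\}$ to be zero-sum free, Corollary~\ref{Cor:CD} (applicable since $2n-2 \ge \tfrac34\cdot 2n$ fails in general, so instead I would work coset-by-coset over the subgroup generated by $(1,0)$, using Lemma~\ref{Lem:CD} in the quotient $\Z_n$) forces the image of everything under $F$ to sum to at most $n-1$; since $B$'s contribution already uses up nearly all of that budget, $c$ must satisfy $F(c)=1$. For $|B|=2n-4$ the same budget argument leaves room for the two added points to contribute a total of $F$-value in $\{0,1,2\}$ with each $F(c_i)\in\{0,1\}$, and a zero-sum obstruction (if both were $0$, i.e. both $c_i$ lie in the subgroup $\langle b_1\rangle$ resp. the relevant coset, one produces a short zero-sum) forces at least one $F(c_i)=1$.

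\textbf{Main obstacle.} The delicate part is the second paragraph: showing that a completion with a \emph{non-generic} dominant direction forces $B$ into exactly the exceptional shape $\{b_1^{n-2}, b_2^{n-2}\}$ and nothing slightly larger or perturbed. This requires combining property $B$ applied to several overlapping completions with the fine multiplicity bound of Lemma~\ref{Lem:CD} inside cosets, and carefully ruling out intermediate configurations where, say, $B$ has $n-2$ copies of $b_1$, $n-3$ copies of $b_2$, and one stray point --- these are exactly the cases where the naive counting is off by one and where the co-primality of $n$ with $6$ and $n\ge 5$ must be used. I expect the rest (choice of $F$, the budget computations via Corollary~\ref{Cor:CD}/Lemma~\ref{Lem:CD}) to be routine bookkeeping.
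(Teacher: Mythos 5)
Your opening move coincides with the paper's: each completion is a maximal zero-sum free set of size $2n-2$, so property $B$ gives it a dominant element of multiplicity $\ge n-2$, and one then analyses which elements of $B$ could play that role (the paper calls these the \emph{important} elements of $B$). But the dichotomy you build on top of this is false, and it is exactly where the content of the lemma lies. You claim that if two completions have genuinely different dominant directions, then $B$ must be the exceptional set $\{b_1^{n-2}, b_2^{n-2}\}$. Counterexamples: for $|B|=2n-3$, the set $B=\{b_1^{n-2}, b_2^{n-2}, b_1+b_2\}$ admits exactly the completions $c=b_1$ and $c=b_2$, whose dominant elements differ, yet $F$ with $F(b_1)=F(b_2)=1$ works; for $|B|=2n-4$, the set $\{b_1^{n-3}, b_2^{n-3}, (b_1+b_2)^2\}$ has the four completions $\{b_1^2\}$, $\{b_2^2\}$, $\{b_1,b_2-b_1\}$, $\{b_2,b_1-b_2\}$, again with two distinct dominant directions and again $F(b_1)=F(b_2)=1$ suffices. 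The "off by one" configurations you propose to rule out in your obstacle paragraph (e.g.\ $n-2$ copies of $b_1$, $n-3$ copies of $b_2$, one stray point) genuinely occur and are not exceptions; the task there is not to exclude them but to enumerate their admissible completions and exhibit an $F$ for each, which is what the paper does by an explicit case distinction: $B$ has at most two important elements, and with two of them $B=\{b_1^{m_1},b_2^{m_2},(b_1+b_2)^{m_3}\}$, after which all values of $(m_1,m_2,m_3)$ are treated one by one. So your plan, as written, attempts to prove a statement (uniqueness of the dominant direction outside the exception) that is simply not true, and it omits the construction of $F$ in precisely the cases that require work.

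Two further inaccuracies: describing $F$ as "the projection killing the dominant direction" fails whenever the dominant element $b$ has multiplicity $n-2$ or less in $B$, since then $c=b$ is itself an admissible (sometimes the only) completion and one needs $F(b)=1$, not $F(b)=0$; the correct $F$ depends on the multiplicities and is chosen case by case. And in case (2) the conclusion "$F(c_i)\in\{0,1\}$ and at least one equals $1$" does not follow from a coset/budget argument via Lemma~\ref{Lem:CD} or Corollary~\ref{Cor:CD} as you sketch (Corollary~\ref{Cor:CD} concerns $\Z_n$, and its hypotheses do not transfer in the way indicated); in the paper it falls out of the explicit list of possible completions in each configuration. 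Also, coprimality of $n$ to $6$ is not a hypothesis of this lemma and is not used. Your identification of the exceptional $B$, and the observation that no $F$ can exist for it, are correct.
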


\begin{proof}
Every completion of $B$ to a zero-sum free set contains an element $b$
with multiplicity $n-2$ or $n-1$ such that all other elements of the
completion are contained in a Co-set of $\langle b\rangle$
which is a generator of $\Z_n^2/\langle b\rangle$.
We will call an element of $B$ \emph{important} if it could get such an
element after completion; i.e.\ an element $b\in B$ is important if its
multiplicity is at least $n-3$ in the first case or $n-4$ in the second case,
if its order is $n$ and if all other elements of $B$ are contained in
a Co-set of $\langle b\rangle$ which is a generator of $\Z_n^2/\langle b\rangle$.
$B$ contains at least one important element.
We will do case distinctions
between the different possibilities for the important elements of $B$.
But before we start, let us have a closer look at what can happen if $B$
contains two important elements, say $b_1$ and $b_2$.

First note that these two elements generate $\Z_n^2$,
as (by the importance of $b_1$)
$b_2$ lies in a Co-set of $\langle b_1\rangle$ generating
$\Z_n^2/\langle b_1\rangle$.
Now $b_2$ fixes the Co-set of $\langle b_1\rangle$ and vice versa, so
all elements of $B$ other than $b_1$ and $b_2$ lie in both
$b_2 + \langle b_1\rangle$ and $b_1 + \langle b_2\rangle$; we get
$B = \{b_1^{m_1}, b_2^{m_2}, (b_1+b_2)^{|B|-m_1-m_2}\}$. In particular,
$B$ contains no third important element.

First consider the case $|B| = 2n-3$. We distinguish the following
cases:
\begin{itemize}
\item
  $B$ contains only one important element $b$. Then the other elements
  of $B$ define a Co-set $L$ of $\langle b\rangle$, and all elements $c$
  completing $B$ either are equal to $b$ or lie in $L$.
  If $b$ has multiplicity $n-1$, then $c = b$ is impossible, so choose
  $F$ such that $F(L) = 1$.
  If $b$ has multiplicity $n-2$, then there are only two possibilities
  for $c$: $c = b$ and one other possibility on $L$ (such that the sum
  of $c$ and the elements of $B \cap L$ is equal to $b$). Choose $F$ to be 1 on these
  two possibilies.
  If $b$ has multiplicity $n-3$, then only $c = b$ is possible.
\end{itemize}
In the remaining cases, 
$B$ contains two important elements, so
$B = \{b_1^{m_1}, b_2^{m_2}, (b_1+b_2)^{m_3}\}$
for some $m_1,m_2,m_3$ satisfying and $m_1+m_2+m_3 = 2n-3$.
We may suppose $m_1 \ge m_2$.
\begin{itemize}
\item
  $m_1 = n-1$: All completions of $B$ lie in $b_2 + \langle b_1\rangle$.
\item
  $m_1=m_2=n-2$, $m_3=1$: There are two possible completions: $c=b_1$
  and $c=b_2$.
\item  
  $m_1=n-2$, $m_2=n-3$, $m_3=2$: There are two possible completions:
  $c=b_1$ and $c=b_2-b_1$.
\item
  $m_1=m_2=n-3$, $m_3=3$: There is no possible completion.
\end{itemize}

Now consider the case $|B| = 2n-4$. We distinguish the following cases:
\begin{itemize}
\item
  $B$ contains only one important element $b$. Then the other elements
  of $B$ define a Co-set $L$ of $\langle b\rangle$, 
  and for all completions $\{c_1,c_2\}$, both $c_i$ lie in $L \cup \{b\}$.
  If the multiplicity of $b$ in $B$ is $n-1$ or $n-2$, we can take $F$
  to be the function which is 1 on $L$ (and $0$ on $b$). Otherwise
  at least one of the $c_i$ is equal to $b$ and the other one either
  es equal to $b$, too, or it lies on $L$ and is determined by $B$.
  So a function $F$ exists.
\end{itemize}
Again, in the remaining cases $B = \{b_1^{m_1}, b_2^{m_2}, (b_1+b_2)^{m_3}\}$
with $m_1 \ge m_2$ and $m_1+m_2+m_3 = 2n-4$.
\begin{itemize}
\item
  $m_1 = m_2 = n-2$, $m_3 = 0$. This is the exception mentioned in the statement
  of the lemma.
\item
  $m_1=n-2$, $m_2\le n-3$: There are three types of completions:
  $c_1 = b_1$ and $c_2 \in b_2+\langle b_1\rangle$;
  $c_1 = c_2 = b_2$; both $c_i$ lie in $b_2+\langle b_1\rangle$
  with some condition on $c_1+c_2$. (Note that in the case $m_2 = n-3$,
  we have $m_3 = 1$ and $c_1=b_2$ implies $c_2 = b_1$.)
  So the function $F$ which maps
  $b_2+\langle b_1\rangle$ to $1$ does the job.
\item
  $m_1 = m_2 = n-3$, $m_3 = 2$: There are four possible completions:
  $\{b_1^2\}$, $\{b_2^2\}$, $\{b_1, b_2-b_1\}$ and $\{b_2, b_1-b_2\}$.
  Take $F$ to map $b_1$ and $b_2$ to $1$.
\item
  $m_1=n-3$, $n_2=n-4$, $m_3=3$: There are two possible completions:
  $\{b_1^2\}$ and $\{b_1, b_2-2b_1\}$. (Note that $\{b_2^2\}$ does not work.)
  Take $F$ to map $b_1$ and $b_2-2b_1$ to 1.
\item
  $m_1 = m_2 = n-4$, $m_3 = 4$: No completion is possible.
\end{itemize}
\end{proof}

We will need the following refined version of part 2 of
Lemma~\ref{Lem:completions}:

\begin{Lem}\label{Lem:compGraph}
Suppose that $n\ge 5$ is an odd integer having property $B$.
Suppose further that $B$ is a subset of $\Z_n^2$ with $2n-4$ points.
Let $C$ be the set of two-element-sets $\{c_1,c_2\} \subset \Z_n^2$ such that 
$B \cup \{c_1,c_2\}$ is zero-sum free.
Then, up to an automorphism of $\Z_n^2$, $C$ is a subset of
one of the following sets:
\begin{enumerate}
\item
$C_1 = \big\{\{(x_1,1), (x_2,1)\}\mid x_1, x_2\in \Z_n\big\}$.
\item
$C_2 = C_2' \cup C_2''$ with 
$C_2' = \big\{\{(1,0), (x,1)\}, \{(x,1), (1-x,1)\}\mid x\in \Z_n\big\}$
and
$C_2'' = \big\{\{(0,1), (1,y)\}, \{(1,y), (1,1-y)\}\mid y\in \Z_n\big\}$.
\item
$C_3 = C_3' \cup C_3''$ with 
$C_3' = \big\{\{(1,0)^2\}, \{(1,0), (-1,1)\}\big\}$ and
$C_3'' = \big\{\{(0,1)^2\}, \{(0,1), (1,-1)\}\big\}$.
\end{enumerate}
\end{Lem}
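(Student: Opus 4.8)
The plan is to prove this by refining the case distinction already carried out in the proof of Lemma~\ref{Lem:completions}(2). First I would dispose of the trivial situation: if $B$ has no zero-sum free completion then $C=\emptyset$ and there is nothing to do, so from now on $B$ is itself zero-sum free and Lemma~\ref{Lem:completions}(2) applies. Next I would treat the exceptional set $B=\{b_1^{n-2},b_2^{n-2}\}$ (with $b_1,b_2$ a basis) directly. After normalising $b_1=(1,0)$, $b_2=(0,1)$ by an automorphism, projecting onto each of the two coordinates converts ``$B\cup\{c_1,c_2\}$ zero-sum free'' into elementary counting statements modulo $n$; from these one reads off that the completions are exactly the pairs $\{(1,0),(x,1)\}$ and $\{(x,1),(1-x,1)\}$ (coming from the projection killing the second coordinate) together with the pairs $\{(0,1),(1,y)\}$ and $\{(1,y),(1,1-y)\}$ (from the other projection), i.e. $C=C_2'\cup C_2''=C_2$.

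For $B$ not exceptional, the proof of Lemma~\ref{Lem:completions}(2) tells me that $B$ has one or two important elements and lists the possible shapes of $B$; I would go through these shapes and, in each, determine $C$ exactly by the same projection trick. The mechanism is always the following: if $b\in B$ is an important element of multiplicity $\mu$ and $L$ is the coset of $\langle b\rangle$ containing $B\setminus\{b^\mu\}$, then — since $B$ is not the exceptional set — the support of $B\cap L$ has more than one point, and a short argument in $\Z_n\cong\Z_n^2/\langle b\rangle$ shows this forces both members of any completion into $L\cup\{b\}$; projecting modulo $\langle b\rangle$ then reduces zero-sum-freeness to a single congruence modulo $n$ that pins down precisely which pairs occur.

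Carrying this out gives three regimes. If some important element has multiplicity $n-1$, the completions are exactly all pairs contained in the single coset $L$, so after the automorphism sending $b\mapsto(1,0)$ and $L\mapsto\{(x,1)\mid x\in\Z_n\}$ we obtain $C\subseteq C_1$. If the largest multiplicity is $n-2$, the completions are the pairs $\{b,c\}$ with $c\in L$ together with the pairs $\{c_1,c_2\}\subset L$ whose $\langle b\rangle$-coordinates sum to a fixed constant $\kappa$; as $n$ is odd one may compose with an automorphism fixing $b$ and translating $L$ to arrange $\kappa=1$, and the \emph{same} automorphism carries the doubled completion $\{c^2\}$ (for which twice the coordinate is $\kappa$) onto a pair of the shape $\{(y,1),(1-y,1)\}$, so that $C\subseteq C_2$. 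Finally, if the largest multiplicity is at most $n-3$ — which covers all the remaining two-important-element shapes $\{b_1^{m_1},b_2^{m_2},(b_1+b_2)^{m_3}\}$ with $m_1\le n-3$, where the possible completions are precisely the short list exhibited in the proof of Lemma~\ref{Lem:completions}(2) — there are at most four completions and one checks by hand that a suitable automorphism maps them inside $C_3'\cup C_3''=C_3$ (the case of no completion being vacuous).

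The step I expect to be the main obstacle is making the two-important-element bookkeeping genuinely exhaustive and then choosing the automorphisms so that the sporadic completions (such as $\{b_1^2\}$, $\{b_2^2\}$, $\{b_1,b_2-b_1\}$ or $\{b_1,b_2-2b_1\}$) land exactly in the prescribed finite sets; this is where the precise shapes of $C_2'$ and $C_3'$, and the hypothesis that $n$ is odd — so that $2$ is invertible and a doubled pair $\{c^2\}$ can be absorbed into the family $\{(y,1),(1-y,1)\}$ — enter in an essential way. Everything else is routine once one has reduced, via property $B$ and Lemma~\ref{Lem:completions}, to counting in the cyclic quotient by an important element.
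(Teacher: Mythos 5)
Your proposal is correct and follows essentially the same route as the paper: it reruns the important-element case distinction from Lemma~\ref{Lem:completions}, determines the possible completions in each case by computing in the quotient $\Z_n^2/\langle b\rangle$, and normalises with automorphisms of $\Z_n^2$ (using that $n$ is odd exactly where the paper uses the map sending $(0,1)$ to $(\tfrac12,1)$). The only difference is organisational — you group the cases by the largest multiplicity and treat the exceptional set $\{b_1^{n-2},b_2^{n-2}\}$ first, whereas the paper lists the cases by the number of important elements — but the substance of the argument is the same.
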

\begin{proof}
As in the proof of Lemma~\ref{Lem:completions}, we consider the different
possibilities for the important elements. If $B$ contains only one important element,
we can suppose that it is $(1,0)$ and that the other elements of $B$ have
$y$-coordinate one; we denote the multiplicity of $(1,0)$ by $m_1$.
If there are two important elements, we suppose that
$B = \{(1,0)^{m_1}, (0,1)^{m_2}, (1,1)^{m_3}\}$ with $m_1 \ge m_2$.
\begin{itemize}
\item
One important element, $m_1 = n - 1$: $C = C_1$.
\item
One important element, $m_1 = n - 2$: apply an automorphism of $\Z_n^2$
fixing $(1,0)$ and mapping the sum of those $n-2$ elements of $B$ with
$y$-coordinate one to $(0,-2)$. Then $C = C_2' \subset C_2$.
\item
One important element, $m_1 = n - 3$: apply an automorphism
fixing $(1,0)$ and mapping the sum of those $n-1$ elements of $B$ with
$y$-coordinate one to $(2,-1)$. Then $C = C_3' \subset C_3$.
\item
One important element, $m_1 = n - 4$: $C =  \big\{\{(1,0)^2\}\big\}\subset C_3$.
\item
Two important elements, $m_1=m_2=n-2, m_3=0$: $C = C_2$.
\item
Two important elements, $m_1=n-2, m_2=n-3, m_3=1$: apply an automorphism
fixing $(1,0)$ and mapping $(0,1)$ to $(\frac12,1)$. Then $C = C_2'\subset C_2$.
\item
Two important elements, $m_1=n-2, m_2=n-4, m_3=2$: apply an automorphism
fixing $(1,0)$ and mapping $(0,1)$ to $(1,1)$. Then $C = C_2'\subset C_2$.
\item
Two important elements, $m_1=m_2=n-3, m_3=2$: $C = C_3$.
\item
Two important elements, $m_1=n-3, m_2=n-4, m_3=3$: apply an automorphism
fixing $(1,0)$ and mapping $(0,1)$ to $(1,1)$. Then $C = C_3'\subset C_3$.
\item
Two important elements, $m_1=m_2=n-4, m_3=4$: $C = \emptyset$.
\end{itemize}
\end{proof}

In addition, we will need the two following lemmas:

\begin{Lem}
\label{Lem:NoFunc1}
Suppose $n$ is an integer co-prime to 6 and
$\Abar\subseteq \Z_3^3$ has $10$ elements.
Suppose further that 
$\Abar$ has no zero-sum of length $\leq 3$ and $\Abar$ has no two disjoint
zero-sums.
Then there is no multi-function $g\colon \Abar\to\Z_n$
(i.e.\ function which may take different values on different copies
of an element $a \in \Abar$) such that
for every zero-sum $Z\subseteq \Abar$ we have
$\sum_{z\in Z}g(z)=1$.
\end{Lem}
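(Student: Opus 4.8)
The plan is to reduce the statement to a finite case check and then settle each case with an explicit linear dependence among zero-sums. First I would pin down the possible shapes of $\Abar$. No element can occur in $\Abar$ with multiplicity $\ge 3$, since $3a=0$ in $\Z_3^3$ would make $\{a^3\}$ a zero-sum of length $3$; and $\Abar$ cannot contain both $a$ and $-a$. Moreover the support $S:=\operatorname{supp}\Abar$ contains no zero-sum of length $\le 3$ formed from distinct elements, so by Lemma~\ref{Lem:Length3} we get $|S|\le 8$, with $S$ linearly equivalent to $\{x,y,z,x{+}y,x{+}y{+}z,x{+}2y{+}z,2x{+}z,y{+}2z\}$ in the extremal case $|S|=8$. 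Since $|\Abar|=10$ and multiplicities are at most $2$, exactly $10-|S|\in\{2,3,4,5\}$ elements of $\Abar$ are doubled, so up to the action of $\operatorname{GL}_3(\mathbb F_3)$ there remain only finitely many candidates. I would have a computer enumerate them and discard those that actually contain two disjoint zero-sums (for the discarded ones the computer exhibits the two zero-sums); call the resulting finite list $\mathcal L$.

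Next, fix $\Abar\in\mathcal L$. Numbering the ten copies of elements of $\Abar$, a multi-function $g$ becomes a vector in $\Z_n^{10}$, and each zero-sum $Z$ imposes the equation $\langle\mathbf 1_Z,g\rangle=1$, where $\mathbf 1_Z\in\{0,1\}^{10}$ is the indicator of $Z$. Such a $g$ cannot exist as soon as one produces integers $(\lambda_Z)$ indexed by the zero-sums of $\Abar$ with $\sum_Z\lambda_Z\mathbf 1_Z=0$ in $\Z^{10}$ and $c:=\sum_Z\lambda_Z$ a nonzero integer all of whose prime factors lie in $\{2,3\}$: applying $g$ to the relation yields $0=c$ in $\Z_n$, impossible because $\gcd(n,6)=1$ and $n\ge 2$. (In practice $g$ is forced to be constant on the copies of any element appearing in two zero-sums that differ only in which copy is used, which lets one work with $g$ as a function on $S$ and with the relation at the level of elements of $\Z_3^3$.) The claim is that such a certificate exists for every $\Abar\in\mathcal L$, and I would let the computer find one in each case. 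For concreteness, when $\Abar=\{e_1^2,e_2^2,e_3^2,e_1{+}e_2,e_1{+}e_3,e_2{+}e_3,e_1{+}e_2{+}e_3\}$ one may use the zero-sums $W_0=\{e_1^2,e_2^2,e_3^2,e_1{+}e_2{+}e_3\}$, $W_1=\{e_1,e_2,e_3,e_1{+}e_2,e_1{+}e_3,e_2{+}e_3\}$, $W_{ij}=\{e_i^2,e_j^2,e_i{+}e_j\}$ for $\{i,j\}\subset\{1,2,3\}$, and $Z_1=\{e_1{+}e_2,e_1{+}e_3,e_2{+}e_3,e_1{+}e_2{+}e_3\}$, for which $W_0+2W_1-W_{12}-W_{13}-W_{23}-Z_1=0$ with coefficient sum $c=-1$.

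The routine parts are the reduction above (an immediate consequence of Lemma~\ref{Lem:Length3}) and the elementary remark that a single $\{2,3\}$-smooth integer relation among zero-sums already rules out $g$. The genuine difficulty — the reason this step is done by computer — is twofold: carrying out the case distinction and verifying, for every candidate excluded from $\mathcal L$, that it truly admits two disjoint zero-sums; and checking that the second step never fails, i.e.\ that for each $\Abar\in\mathcal L$ the obstruction to solving the linear system over $\Z_n$ is $\{2,3\}$-smooth rather than, say, a prime coprime to $6$. The latter is not clear a priori, and were it to fail the lemma itself would be false.
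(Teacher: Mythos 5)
The core of your argument --- ``I would let the computer find [a certificate] in each case'' --- is precisely the content of the lemma, and it is not carried out: you verify it for a single configuration and explicitly concede that the existence of a $\{2,3\}$-smooth certificate for every admissible $\Abar$ is not clear a priori. As it stands this is a plan, not a proof; the finite enumeration of candidate sets and the one worked example do not establish the statement for the remaining configurations. The paper's route is quite different and much lighter: it reduces the multi-function case to the single-valued case --- since no element of $\Abar$ has multiplicity $\ge 3$, one replaces the two values of $g$ at a doubled point by their mean (legitimate because $n$ is odd), and the swap argument (the one you mention only in parentheses) shows that no zero-sum can contain exactly one copy of a point where $g$ takes two distinct values, so the averaging changes no zero-sum's $\Z_n$-sum --- and then the single-valued statement is exactly \cite[Theorem~1]{Montreal}, which is cited, not reproved. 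In other words, the only new content of the paper's proof is the reduction you relegate to a parenthesis; the finite case analysis you propose to redo by machine is essentially the already-published result being quoted.

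Two smaller points. First, your sample certificate is a relation among elements of the support, not among the ten copies: with copies distinguished, $W_0+2W_1-W_{12}-W_{13}-W_{23}-Z_1$ does not vanish in $\Z^{10}$ if ``$2W_1$'' means one copy-selection of $W_1$ taken twice; you need either the two complementary copy-selections of $W_1$, or to first invoke the constancy-of-$g$-on-copies argument --- so your copy-level framework and your element-level certificates must be reconciled for each $\Abar$, which is exactly the issue the paper's averaging step disposes of once and for all. Second, your remark that ``were it to fail the lemma itself would be false'' is inaccurate: an over-$\Z$ relation with nonzero $\{2,3\}$-smooth coefficient sum is sufficient but not necessary for unsolvability in $\Z_n$ for all $n>1$ coprime to $6$, since unsolvability modulo a particular $n$ can also be forced by relations that hold only modulo $n$; the correct completeness check for each candidate set would be a Smith-normal-form analysis of the system, in the spirit of Lemma~\ref{Lem:LinearSystems}.
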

\begin{proof}
If we would require $g$ to be a real (i.e. single-valued) function,
then this would be \cite[Theorem~1]{Montreal}. So the only thing we have to
check is that the existence of a multi-function $g$ implies the existence
of a real function $g'$ with the same properties.

Define $g'$ by taking for $g(a)$ the mean value of
the values of $g(a)$. Note first that the maximal multiplicity of points in $\Abar$
is 2 (as $\Abar$ does not contain a zero-sum of length 3), so $g$
can have at most two values at any point. In particular
the mean value makes sense (because $2 \nmid n$).

Now consider any point $a \in \Abar$ where $g$ has two values.
The modification does not change $\sum_{z\in Z}g(z)$ if $Z$ does not contain
$a$ or if $Z$ contains both copies of $a$. However, no zero-sum $Z$ can contain
only one copy of $\Abar$, for otherwise, we would get two different values
for $\sum_{z\in Z}g(z)$, which contradicts $\sum_{z\in Z}g(z)=1$.
\end{proof}

\begin{Lem}
\label{Lem:NoFunc2}
Suppose $n$ is an integer co-prime to 6,
$\Abar\subseteq \Z_3^3$ has $13$ elements, and $f\colon \Abar\to\Z_n^2$
is a multi-function.
Suppose further that 
$\Abar$ has no zero-sum of length $\leq 3$ and $\Abar$ has no three disjoint
zero-sums.
Let $C$ be the set of two-element-sets
$\{\sum_{z\in Z_1}f(z), \sum_{z\in Z_2}f(z)\}$, where $Z_1$ and $Z_2$
are two disjoint zero-sums in $\Abar$. Then $C$ is not a subset of
any of the three sets $C_1$, $C_2$ or $C_3$ of Lemma~\ref{Lem:compGraph}.
\end{Lem}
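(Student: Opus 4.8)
My plan is to reduce the assertion to a finite verification that is uniform in $n$, and then carry that verification out by computer. The first step is to bound the possible shapes of $\Abar$. Since $3a = 0$ for every $a \in \Z_3^3$, no element occurs in $\Abar$ with multiplicity $\ge 3$; and by Lemma~\ref{Lem:Length3} any $9$ distinct elements of $\Z_3^3$ contain a zero-sum of length $\le 3$, so the support of $\Abar$ consists of at most $8$ distinct elements, hence (since $|\Abar| = 13$) of exactly $7$ or $8$ of them. If the support has $8$ elements, then by Lemma~\ref{Lem:Length3} it is linearly equivalent to the unique $8$-element configuration listed there; if it has $7$ elements there are again only finitely many possibilities up to linear equivalence. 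A computer can list all of these, keep those which have no three disjoint zero-sums, and for each surviving $\Abar$ tabulate the (finite) list of all pairs $(Z_1, Z_2)$ of disjoint zero-sums; along the way one sees that every surviving $\Abar$ has at least one such pair, so that $C \ne \emptyset$ (otherwise the lemma would be false). Note also that the property to be proved depends on $\Abar$ only through its linear-equivalence class, so this enumeration suffices.

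The second step removes the automorphism from the statement. For an automorphism $\alpha$ of $\Z_n^2$ and a multi-function $f$, let $\alpha \circ f$ be the multi-function $a \mapsto \alpha(f(a))$; then $\alpha(\sum_{z \in Z} f(z)) = \sum_{z \in Z} (\alpha \circ f)(z)$, so the set built from $\alpha \circ f$ the way $C$ is built from $f$ is precisely $\alpha(C)$. As $\alpha$ and $f$ vary, $\alpha \circ f$ ranges over all multi-functions, so the conclusion ``$C$ is, up to an automorphism of $\Z_n^2$, not a subset of $C_i$'' is equivalent to ``there is no multi-function $f \colon \Abar \to \Z_n^2$ with $C \subseteq C_i$''. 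From now on we may therefore forget automorphisms and just show, for each of the finitely many $\Abar$ and for $i = 1, 2, 3$, that no such $f$ exists.

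The third step is linear algebra. Each of $C_1$, $C_2 = C_2' \cup C_2''$ and $C_3 = C_3' \cup C_3''$ is a finite union of ``pieces'' cut out by $\Z$-linear conditions on the coordinates of the two members of a pair: e.g.\ $C_1$ is ``both members have second coordinate $1$'', one piece of $C_2'$ is ``one member equals $(1,0)$'', another is ``both members have second coordinate $1$ and their first coordinates sum to $1$'', and each element of $C_3$ is a single pair with fully prescribed coordinates. Choosing, for every disjoint pair $(Z_1, Z_2)$ in the tabulated list, one piece of $C_i$ into which it must fall, the requirement $C \subseteq C_i$ becomes a system of $\Z$-linear congruences modulo $n$ in the coordinates of the values $f(a)$; there are only finitely many such systems, with coefficients and number of equations independent of $n$, and $C \subseteq C_i$ holds for \emph{some} $f$ iff one of these systems is solvable modulo $n$. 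What remains is to check that every such system is unsolvable modulo every $n$ coprime to $6$: the computer verifies that each system is already inconsistent over $\Q$ --- which, by the Smith normal form argument in the proof of Lemma~\ref{Lem:LinearSystems}, leaves only finitely many moduli $n$ for which it could be solvable --- and then checks those finitely many (small) moduli directly.

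The main obstacle is the size of this case distinction together with the demand that the last step be uniform over \emph{all} $n$ coprime to $6$: there is no single-modulus parity trick available, which is exactly why the argument must descend to $\Q$-inconsistency plus an explicit finite check of exceptional moduli, and why the combinatorial heart of the matter --- that a $13$-element subset of $\Z_3^3$ with no short zero-sum and no three disjoint zero-sums nevertheless carries enough disjoint pairs of zero-sums to obstruct all of the systems above --- has to be established separately for each admissible configuration $\Abar$ rather than in one stroke. Sharpening the classification in the first step so that the ensuing enumeration is genuinely feasible is the part most worth doing by hand; the remaining bulk is, unavoidably, left to the computer.
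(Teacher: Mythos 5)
Your proposal is correct and is, at bottom, the same kind of proof the paper gives: a finite, computer-executed verification consisting of (i) an enumeration of the admissible $\Abar$ up to linear equivalence and (ii) a reduction, for each $\Abar$ and each $C_i$, of the existence of $f$ with $C\subseteq C_i$ to the solvability modulo $n$ of finitely many $\Z$-linear systems with $n$-independent coefficients, ruled out by integer linear algebra uniformly in $n$ coprime to $6$. The differences are organizational. For (i), your observation that the support has $7$ or $8$ distinct elements, the $8$-element case being the unique configuration of Lemma~\ref{Lem:Length3}, is a nice hand-made cut; the paper instead enumerates by brute force with partial symmetry pruning and an automorphism test, arriving at the $15$ multi-sets of Section~\ref{sect:Algo2}. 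For (ii), the paper indexes its case split by graph homomorphisms into a small merged graph (one choice per zero-sum, keeping only necessary conditions), whereas you choose a ``piece'' of $C_i$ for each disjoint pair of zero-sums; your translation is exact, but it is exponential in the number of pairs rather than the number of zero-sums, so the recursive assign-and-prune search the paper describes (or an equivalent) is needed for feasibility --- worth stating. Finally, your unsolvability test ($\Q$-inconsistency plus a direct check of the finitely many exceptional moduli, via Lemma~\ref{Lem:LinearSystems}) is a complete decision procedure, while the paper uses a faster sufficient criterion (Gaussian elimination over $\Z$ yielding divisibility conditions that force $n$ to have only the prime factors $2$ and $3$); both are sound, and your remark that every surviving $\Abar$ must admit at least one disjoint pair of zero-sums is indeed part of what the computation confirms.
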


\begin{proof}
This has been verified by our computer. For details on how this
has been done see Section~\ref{sect:Algo2}.

Note that concerning $C_1$, this is just an unnecessarily complicated way of
saying that there is no function $g \colon \Abar \to \Z_n$ which maps
any zero-sum of $\Abar$ which is disjoint to another zero-sum
to one.
\end{proof}

\subsection{The proof itself}

We are now in a position to prove Theorem~\ref{thm:Davenport}.

\begin{proof}[Proof of Theorem~\ref{thm:Davenport}]
Suppose $n$ is co-prime to $6$, $B(n)$ holds true,
$G=\Z_3\oplus\Z_{3n}^2$, and $A\subseteq G$ is a multi-set of
$M(G)=6n+1$ elements. Suppose $A$ contains no zero-sum. We have to get
to a contradiction.

Let $\Abar$ be the projection of $A$ onto $\Z_3^3$, and let $f\colon \Abar
\to \Z_n^2$ be the multi-function such that $(a, f(a))$ is the preimage
of $a \in \Z_3^3$ in $A$ under the projection.

We remove zero-sums of length $\leq 3$ from $\Abar$ as long as possible,
ending in a set $\Abar^*$ with less than 17 points (by Lemma~\ref{Lem:Length3}).
Denote by $B$ the multi-set
in $\Z_n^2$ corresponding to the removed zero-sums:
for each removed zero-sum $Z \subset \Abar$, put the element
$\sum_{z\in Z} f(z)$ into $B$. As $A$ is zero-sum free,
so is $B$. The strategy in the remainder of the proof is to
consider zero-sums $Z \in \Abar^*$ and their corresponding elements
$c = \sum_{z\in Z}f(z)$ in $\Z_n^2$. If we find such a $c$ such that
$B \cup \{c\}$ does contain a zero-sum, we have our desired contradiction.
When using this strategy, we may assume that while passing from $\Abar$ to
$\Abar^*$ we never removed zero-sums of length $< 3$; otherwise $\Abar^*$
only gets bigger and the proof gets easier.

Hence $|\Abar^*|$ has the form $3i+1$ and $|B| = 2n-i$.
As $B$ has no zero-sum, we have $|B| \le 2n-2$, so $i \ge 2$ and $|\Abar^*| \geq 7$.
If $|\Abar^*| = 7$, then $\Abar^*$ itself still contains
a zero-sum, so this is not possible either. Therefore $\Abar^*$ consists of
10, 13 or 16 points.

Suppose first that we end with $|\Abar^*| = 16$. Then we have 16 points without
a zero-sum of length $\le 3$. As 9 distinct points would contain such a zero-sum
(by Lemma~\ref{Lem:Length3}) there are precisely 8 points taken
twice. Since the only configuration of 8 distinct points without a
zero-sum of length 3 is the one given in Lemma~\ref{Lem:Length3}, we
find that $\Abar^*$ equals this set with each point taken twice. But this set
contains four disjoint zero-sums:
$\{x, y, (x+y)^{2}\}$,
$\{x, z^{2}, 2x+z\}$,
$\{y, x+y+z, (x+2y+z)^{2}\}$ and
$\{x+y+z, 2x+z, (y+2z)^{2}\}$. So we can enlarge $B$ to a set with $2n-1$
elements, which is a contradiction.

Next, suppose that $|\Abar^*|=10$. Then $B$ consists of $2n-3$
points in $\Z_n^2$, and each zero-sum $Z$ in $\Abar^*$ yields an element
$c = \sum_{z\in Z}f(z)$ of $\Z_n^2$ such that $B \cup \{c\}$ is zero-sum free.
Since $n$ satisfies property $B$ (and is $\ge 5$),
we can apply Lemma~\ref{Lem:completions} and obtain a linear function
$F\colon\Z_n^2\to\Z_n$ such that for every $c$ as above
$F(c) = 1$. But now $g := F \circ f$ is a contradiction to 
Lemma~\ref{Lem:NoFunc1}.

Finally, consider the case $|\Abar^*|=13$. Then $B$ consists of $2n-4$
points in $\Z_n^2$.
We check that $\Abar^*$ and $f$ contradict Lemma~\ref{Lem:NoFunc2}.
It is clear that $\Abar^*$ does not contain a zero-sum of length $\le3$
and that $\Abar^*$ does not contain three disjoint zero-sums.

Denote by $C$ the set of two-element-sets
$\{\sum_{z\in Z_1}f(z), \sum_{z\in Z_2}f(z)\}$, where $Z_1$ and $Z_2$
are two disjoint zero-sums in $\Abar^*$.
Each $\{c_1,c_2\} \in C$ completes $B$ to a zero-sum free subset of $\Z_n^2$,
so by Lemma~\ref{Lem:compGraph}, $C$ is a subset of
one of the three sets $C_i$ mentioned in that lemma. This is exactly what
we need to get a contradiction to Lemma~\ref{Lem:NoFunc2}.
\end{proof}

\section{Computer proof of Lemma~\ref{Lem:NoFunc2}}
\label{sect:Algo2}

Recall the statement of the lemma : we are given an integer
$n$ co-prime to 6, a set
$\Abar\subseteq \Z_3^3$ consisting of $13$ elements, and a
multi-function $f\colon \Abar\to\Z_n^2$.
We suppose that $\Abar$ has no zero-sum of length $\leq 3$ and no three
disjoint zero-sums.
We let $C$ be the set of two-element-sets
$\{\sum_{z\in Z_1}f(z), \sum_{z\in Z_2}f(z)\}$, where $Z_1$ and $Z_2$
are two disjoint zero-sums in $\Abar$.
The statement is that $C$ is not a subset of
any of the three sets $C_1$, $C_2$ or $C_3$ of Lemma~\ref{Lem:compGraph}:
\[
\begin{aligned}
C_1 =& \big\{\{(x_1,1), (x_2,1)\}\mid x_1, x_2\in \Z_n\big\}\\
C_2 =& \big\{\{(1,0), (x,1)\}, \{(x,1), (1-x,1)\}\mid x\in \Z_n\big\} \,\cup\\
&\big\{\{(0,1), (1,y)\}, \{(1,y), (1,1-y)\}\mid y\in \Z_n\big\}
\\
C_3 =& \big\{\{(1,0)^2\}, \{(1,0), (-1,1)\},\,\, \{(0,1)^2\}, \{(0,1), (1,-1)\}\big\}
\end{aligned}
\]

The program is divided into two parts. First find all possible
multi-sets $\Abar$ (up to automorphism of $\Z_3^2$),
regardless of the function $f$, and then, for each
fixed set $\Abar$ and each $i \in \{1,2,3\}$,
find all possible functions $f\colon \Abar \to \Z_n^2$
such that $C \subset C_{i}$. If no such $f$ is found,
then the lemma is proven.

\subsection{Finding all multi-sets $\Abar$}

The program recursively tries every possibility for $\Abar$ by starting
with an empty set and successively adding elements.
After adding an element, it checks right away if $\Abar$
still fulfils the above conditions before adding more elements.

To save some time, symmetry is exploited a bit. For example,
if $\Abar$ contains exactly two elements of multiplicity 2,
then we can suppose that $\Abar$ contains 
$(1,0,0)$ and $(0,1,0)$ with multiplicity 2 and $(0,0,1)$ with
multiplicity 1.

As we do not exploit symmetry completely (this would be too complicated),
the program finds a lot of solutions which are the same up to automorphism,
so we need an algorithm to check whether there is an automorphism
turning one multi-set into another one. It turns out that
all solutions $\Abar$ do contain a basis of $\Z_3^2$ of elements of order two,
so it is enough to try those automorphisms which map this basis
of one of the sets to elements of order two of the other set.

The program finds the following 15 multi-sets. The three $3\times3$-grids
represent the three planes of the cube $\Z_3^3$; the element $(0,0,0)$ is the lower
left corner of the left-most plane. The numbers in the grids indicate the multiplicity
of that element; empty squares mean that the element is not contained in the
set.

\[
\boix{}2{}2{}{}{}{}{}\;\boix2{}{}{}{}{}{}12\;\boix{}{}{}{}{}{}{}22
\quad
\boix{}2{}2{}{}{}{}{}\;\boix2{}{}{}{}{}{}22\;\boix{}{}{}{}{}{}{}12
\quad
\boix{}2{}2{}{}{}1{}\;\boix2{}{}{}{}{}{}{}2\;\boix{}{}{}{}{}{}{}22
\quad
\boix{}2{}2{}{}{}1{}\;\boix2{}{}{}{}{}{}22\;\boix{}{}{}{}{}{}{}{}2
\]\[
\boix{}2{}2{}{}{}2{}\;\boix2{}{}{}{}{}{}{}2\;\boix{}{}{}{}{}{}{}12
\quad
\boix{}2{}2{}{}{}2{}\;\boix2{}{}{}{}{}{}{}1\;\boix{}{}{}{}{}{}{}22
\quad
\boix{}2{}2{}{}{}2{}\;\boix2{}{}{}{}{}{}{}2\;\boix{}{}{}{}{}{}{}21
\quad
\boix{}2{}2{}{}{}2{}\;\boix2{}{}{}{}{}{}12\;\boix{}{}{}{}{}{}{}{}2
\]\[
\boix{}2{}2{}{}{}2{}\;\boix2{}{}{}{}{}{}22\;\boix{}{}{}{}{}{}{}{}1
\quad
\boix{}2{}2{}{}{}{}{}\;\boix2{}{}{}{}{}{}{}2\;\boix{}{}{}{}{}1{}22
\quad
\boix{}2{}2{}1{}{}{}\;\boix2{}{}{}{}12{}2\;\boix{}{}{}{}{}{}{}{}1
\quad
\boix{}2{}2{}2{}{}{}\;\boix2{}{}{}{}{}22{}\;\boix{}{}{}{}{}{}{}{}1
\]\[
\boix{}2{}2{}2{}{}{}\;\boix2{}{}{}{}11{}2\;\boix{}{}{}{}{}{}{}{}1
\quad
\boix{}2{}2{}2{}{}{}\;\boix2{}{}{}{}21{}1\;\boix{}{}{}{}{}{}{}{}1
\quad
\boix{}2{}2{}{}{}2{}\;\boix2{}{}{}1{}22{}\;\boix{}{}{}{}{}{}{}{}{}
\]
\subsection{Finding all functions $f\colon \Abar \to \Z_n^2$}

Now fix a set $\Abar$ as above and fix $C := C_1$, $C := C_2$
or $C := C_3$.
We have to check
that there is no function $f\colon \Abar \to \Z_n^2$ such that
for any pair of disjoint zero-sums $Z_1$ and $Z_2$ in $\Abar$,
the pair
$\{\sum_{z\in Z_1}f(z), \sum_{z\in Z_2}f(z)\}$ is contained in $C$.

This can be reformulated as follows. From $\Abar$, we define the following
graph $G = (V, E)$: the vertices $V$ are the zero-sums $Z \subset \Abar$ such that there
does exist a second zero-sum $Z' \subset \Abar$ which is disjoint to $Z$,
and the edges $E$ are the pairs $Z_1, Z_2 \in V$ which are disjoint.
The set $C$ defines another graph $G' = (V', E')$: $V'$ consists of all
elements which appear in some pair in $C$, and $E' = C$, i.e.\ the edges
are just the pairs contained in $C$.
Any function $f\colon \Abar \to \Z_n^2$ satisfying the above condition
defines a graph homomorphism $\phi\colon G \to G'$,
and a graph homomorphism $\phi\colon G \to G'$ yields a function $f$
if and only if the following system of linear equations $L_\phi$ has a solution
in $\Z_n$: we have two variables $x_i$ and $y_i$ ($i \in \{1, \dots, 13\}$) for
the two coordinates of each $f(a_i), a_i\in \Abar$, and for each
vertex zero-sum $Z = \{a_{i_1},\dots, a_{i_k}\} \in V$ we have the two equations
given by $\sum_{j=1}^{k} a_{i_j} = \phi(Z)$.

The idea of the algorithm is to try every graph homomorphism $\phi$ and
to check that the corresponding system of linear equations $L_\phi$
has no solution for any $n$ co-prime to $6$. But before we can do that,
we have to replace $G'$ by a simpler graph $G''$.

To simplify $G'$, we merge some of the points which differ only in one
coordinate. Thus a graph homomorphism $\phi\colon G \to G''$ will give
less equations in $L_\phi$. We do not ensure that these equations
are enough to prove the existence of $f$; we only need that if the
equations have no solution, then no $f$ exists.

In the case of $C_1$, all this graph homomorphism  is overkill
(as already noted directly after Lemma~\ref{Lem:compGraph}), but
let us formulate it anyway so that we can treat all three cases
similarly.

\begin{itemize}
\item
Case $C_3$: No simplification necessary; $G'' = G'$.
\item
Case $C_1$: Merge all points of $G'$ to one single point in $G''$
with a loop edge. Each zero-sum $Z \in V$ mapped to that point
(i.e.\ all $Z \in V$) yields one equation in
$L_\phi$ saying that the sum of the $y$-coordinates is equal to one.
\item
Case $C_2$: Merge all points $(1, y)$ for $y \ge 2$ into one point
and all points $(x, 1)$ for $x \ge 2$ into one point. So $G''$ looks like this:
\[
\begin{array}{c@{}c@{\quad}c@{\quad}c}
\scriptstyle{\ge 2}&&\bnode{y2}\\
\scriptstyle{1}&\bnode{x0}&\bnode{xy}&\bnode{x2}\\
\scriptstyle{0}&&\bnode{y0}\\[-1ex]
&\scriptstyle{0}&\scriptstyle{1}&\scriptstyle{\ge 2}
\end{array}
\ncline{x0}{y0}
\ncline{x0}{xy}
\ncline{x0}{y2}
\ncline{y0}{xy}
\ncline{y0}{x2}
\nccircle[angle=-45]{y2}{1ex}
\nccircle[angle=-45]{x2}{1ex}
\]
Zero-sums which get mapped to $(1, 0)$, $(0, 1)$ or $(1, 1)$ still yield
two equations in $L_\phi$. Zero-sums which get mapped to $(1, \ge2)$
or $(\ge2, 1)$ yield only one equation saying that the sum
of the $x$-coordinates resp. $y$-coordinates is equal to 1.
In addition, we get equations for each edge which is mapped
to the loop at $(1, \ge2)$ (and, analogously, at $(\ge2, 1)$):
if $(1, y_1)$ and $(1, y_2)$ were connected in $G'$, then
$y_1+y_2 = 1$. So if $Z_1, Z_2 \in V$ are connected and are both mapped to
$(1, \ge2)$, then
the sum of the $y$-coordinates of all points in $Z_1 \cup Z_2$
is equal to 1.
\end{itemize}

Now our graph $G''$ is of reasonable size and it does make sense to
try every possible homomorphism $\phi\colon G \to G''$. This is done
by recursively fixing images $\phi(Z)$ for zero-sums $Z \in V$.
After an image is fixed, the algorithm first checks whether the equations
we already have do already yield a contradiction before going on.

The only thing left to describe is how to check whether a system of
linear equations has no solution in $\Z_n$ for any $n$ co-prime to $6$.
This could be done using the Smith normal form as in the proof of
Lemma~\ref{Lem:LinearSystems}, but this would probably be too slow.
Instead, we use the following method, which proves in sufficiently
many cases that no solution exists. (Note that we do not need an if-and-only-if
algorithm.)

We apply Gaussian elimination over $\Z$ to our system of equations and
then consider only 
the equations of the form ``$a = 0$'' for $a \ne 0$ which we get. Each such equation is
interpreted as a condition on $n$, namely ``$n$ divides $a$''.
If, taking all these equations together,
we get that $n$ has only prime factors $2$ and $3$, then we have a
contradiction.

The algorithm takes about one second in the case $C_1$, 70 minutes in the
case $C_2$, and 5 minutes in the case $C_3$ (for all 15 sets $\Abar$ together).

One more practical remark:
When recursively trying all possible maps $\phi\colon G \to G''$, we use
a slightly intelligent method to choose which $\phi(Z)$ to fix next:
if there is a $Z\in V$ for which there is only one possible image left, we take
that one; otherwise, we take a $Z\in V$ with maximal degree.

\begin{tabular}{lll}
Gautami Bhowmik, &  Immanuel Halupczok,& Jan-Christoph Schlage-Puchta,\\
Universit\'e de Lille 1,& DMA de l'ENS, & Albert-Ludwigs-Universit\"at,\\
Laboratoire Paul Painlev\'e,& UMR 8553 du CNRS, & Mathematisches Institut,\\
U.M.R. CNRS 8524,& 45, rue d'Ulm,  & Eckerstr. 1 \\
  59655 Villeneuve d'Ascq Cedex,& 75230 Paris Cedex 05, & 79104 Freiburg,\\
  France  & France& Germany \\
bhowmik@math.univ-lille1.fr & math@karimmi.de & jcp@math.uni-freiburg.de 
\end{tabular}

\end{document}